\newtheorem{cor}{Corollary}
\newtheorem{teo}{Theorem}
\newtheorem{rem}{Remark}
\newtheorem{lem}{Lemma}
\newtheorem{exa}{Example}
\newcommand{\co}{{\mathcal O}}
\newcommand{\gp}{\mathbb{P}}
\renewcommand{\int}{{\rm int}}
\newcommand{\betabarra}{\bar{\beta}}
\newcommand{\bb}{\bar{\beta}}
\title[Cone of curves and Cox ring of rational surfaces]{The cone of curves and the Cox ring of rational surfaces given by divisorial valuations}
\author{C.~Galindo \and F.~Monserrat}
\curraddr{\texttt{Carlos Galindo:} Instituto Universitario de Matem\'aticas y Aplicaciones de
Castell\'on and Departamento de Matem\'aticas, Universitat Jaume I,
Campus de Riu Sec. s/n, 12071 Castell\'{o} (Spain).} \email{
galindo@mat.uji.es} \curraddr{\texttt{Francisco Monserrat:} Instituto Universitario de
Matem\'atica Pura y Aplicada, Universidad Polit\'ecnica de Valencia,
Camino de Vera s/n, 46022 Valencia (Spain).}
\email{framonde@mat.upv.es}
\date{}
\thanks{Supported by Spain Ministry of Economy
 MTM2012-36917-C03-03 and Univ. Jaume I P1-1B2012-04}
\date{}
\begin{document}

\maketitle

\begin{abstract}
We consider surfaces $X$ defined by plane divisorial valuations $\nu$ of the quotient field of
the local ring $R$ at a closed point $p$ of the projective plane $\mathbb{P}^2$ over an arbitrary algebraically closed field $k$ and centered at $R$. We prove that the regularity of the cone of curves of $X$ is equivalent to the fact that $\nu$ is non positive on ${\mathcal O}_{\mathbb{P}^2}(\mathbb{P}^2\setminus L)$, where $L$ is a certain line containing $p$. Under these conditions, we characterize when the characteristic cone of $X$  is closed and its Cox ring finitely generated. Equivalent conditions to the fact that $\nu$ is negative on ${\mathcal O}_{\mathbb{P}^2}(\mathbb{P}^2\setminus L) \setminus k$ are also given.
\end{abstract}

\section{Introduction}

In this paper we consider rational surfaces $X$ defined by simple finite sequences of point blowing-ups starting with a blow-up at a closed point of the projective plane over an arbitrary algebraically closed field. We characterize those surfaces $X$ whose cone of curves is regular and determine which ones of these are Mori dream spaces. Our surfaces are intimately related with algebraic objects, plane divisorial valuations, which will be useful in our study.

Ideas by Hensel gave rise to the concept of valuation, established in 1912 by K\"{u}rsch\'ak. This concept is an important tool in several areas of mathematics. One of the fields where valuations are very useful is that of resolution of singularities. Between 1940 and 1960, Zariski and Abhyankar put the foundations of the valuation theory applied to resolution of singularities of algebraic varieties \cite{ja21,ja22,ja23,ja1,ja2}. It is well-known that resolution in characteristic zero was proved by Hironaka without using valuations, however many of the attempts and known results involving resolution in positive characteristic use valuation theory (see \cite{tei} as a sample).

Generally speaking, valuations are not well-known objects although some families of them have been rather studied. This is the case of valuations of quotient fields of regular two-dimensional local rings $(R,m)$ centered at $R$ (plane valuations). Zariski gave a classification for them and a refinement in terms of dual graphs can be found in \cite{ja20} (see also \cite{ja15}). Plane valuations are in one to one correspondence with simple sequences of point blowing-ups starting with the blowing-up of Spec$R$ at $m$. Here, simple means that the center of each blow-up is in the exceptional divisor produced by the previous one. Probably, and from the geometric point of view,  the so-called divisorial valuations are the most interesting ones; in the case of plane valuations, they correspond with finite sequences as above and are defined by the last created exceptional divisor. Within the problem of resolution of singularities, valuations are considered  as a local object and, mostly, used to treat the local uniformization problem. However, as we will see, useful global geometrical properties arise associated with certain classes of plane divisorial valuations.

Along this paper, $k$ will denote an algebraically closed field of arbitrary characteristic, $\gp^2:=\gp^2_k$ the projective plane over $k$ and our valuations will be of the quotient field of the local ring $R:=\mathcal{O}_{\gp^2,p}$, where $p$ is a fixed point in $\gp^2$. To fix notation, we set $(X:Y:Z)$ projective coordinates in $\gp^2$, consider the line $L$ with equation $Z=0$, that will be called the line at infinity, and the point $p$ with projective coordinates $(1:0:0)$. In addition, pick affine coordinates $x=X/Z$; $y=Y/Z$ in the chart of $\gp^2$ given by $Z \neq 0$
and consider a divisorial valuation $\nu$ of the quotient field of the local ring $R$ and centered at $R$. A goal of this paper is either to characterize, or to provide geometrical properties of, the fact that the valuation $\nu$ is non positive or negative on all polynomials $p(x,y)$ in the set $k[x,y] \setminus k$.

As mentioned above, some of our characterizations or properties involve a good behaviour  of interesting global objects as the cone of curves, the characteristic cone or the Cox ring attached to the surface that the sequence of point blowing ups determined by $\nu$ defines.

An interesting class of valuations satisfying one of the facts we  characterize, $\nu(p) \leq 0$ for every $p \in k[x,y]$, is studied in \cite{cpr}. There, the authors deduce good properties concerning curves and line bundles on the surface that certain valuations define. These valuations are those attached with  pencils defined by the line at infinity $L$ and curves with only one place at infinity. The study of these curves was started by Abhyankar and Moh \cite{ja4,ja6} and we recall that a projective curve $C \subset \gp^2$ has only one place at infinity when $C \cap L$ is a unique point $p$ and $C$ is reduced and unibranched at $p$. The mentioned valuations are those defined by the sequence of point blowing-ups eliminating the base points of the mentioned pencil. A close class of valuations, also centered at $R$ but non-necessarily of divisorial type, are those  whose centers (maybe infinitely many) are given by some curve, or family of curves, with only one place at infinity. These valuations have been studied in \cite{london, dcc} because they are useful in coding theory. These valuations satisfy an important property, an Abhyankar-Moh semigroup type theorem, that is a result giving a minimal set of generators for the semigroup $-\nu(k[x,y])$, whenever one considers suitable values of the characteristic of the field $k$ \cite{Galmon}. It is also worth to mention the existence of a family of valuations, satisfying the mentioned property on their signs, which plays an important role for the dynamics of polynomial mappings of the affine plane \cite{fj2,fj3}.

Returning to general divisorial valuations $\nu$, we will prove in Theorem \ref{gordo1} that the fact $\nu(p) \leq 0$ for all $p \in k[x,y]$ is equivalent to the one that the cone of curves $NE(X)$ of the surface $X$ defined by $\nu$ is regular, a stronger property that being rational polyhedral. Two more equivalent conditions, (a) and (c), are also stated in the mentioned theorem. Condition (a) is a very simple to check local property, while Condition (c) is global, the nefness of a certain divisor derived from $\nu$. As a consequence, it is easy to check whether a valuation satisfies the conditions in Theorem \ref{gordo1} and it is clear that these valuations are a natural extension of those in \cite{cpr} and they keep good global properties (see also Remark \ref{llados}).

Recall that cones associated with varieties have been used in the last decades to approach the theory of minimal models. Kawamata's cone theorem \cite{he5}, which generalizes a result by Mori \cite{he9} and implies that the closure of the cone of curves of a variety is rational polyhedral if its anticanonical bundle is ample, is an important ingredient in the model minimal program. For Calabi-Yau varieties, a substitute of the cone theorem is the Morrison-Kawamata conjecture, related with the nef cone, \cite{to29, to30,to19, to}, which is a theorem in dimension 2 \cite{to41,to33, to19}. With respect to the cone of curves, even for the simplest case of surfaces, there is no general result that allows us to decide when it is rational polyhedral. Some other reasons explaining the interest of satisfying this property can be seen in \cite{c-g,ni}. It is worthwhile to add that, for surfaces, apart from Kawamata's result, there exists a number of results guarantying good properties of the cone of curves (see \cite{cpr,ijm,hel,l-m} for instance).

Two equivalent conditions to the fact $\nu(p) < 0$ for all $p \in k[x,y]\setminus k$ are also given in Theorem \ref{gordo2}. They are  of either of global type or a mix of global and local data.

The characteristic cone is other interesting cone which can be attached with a variety. It was introduced by Hironaka, Mumford and Kleiman  and, for instance, its relative version is useful in the study of projective birational morphisms $\pi: Z \rightarrow Y$ of normal algebraic varieties which are an
isomorphism outside $\pi^{-1}(O)$, for a closed point $O \in Y$ \cite{he3}. Our Theorem \ref{gordo3} gives an equivalent statement to the fact that the characteristic cone of the surface attached to a valuation which is non positive on polynomials of $k[x,y]$ (valuations previously characterized in Theorem \ref{gordo1}) is closed. Using \cite{Hu,gm}, we prove in Corollary \ref{COX} that the same statement characterizes those surfaces given by valuations as mentioned whose Cox rings are finitely generated.

Cox rings were introduced by Cox in \cite{cox5} for toric varieties to show  that they behave like a projective space in many ways. This definition was extended  to varieties with free, finitely generated Picard group \cite{Hu} and, roughly speaking,  this ring is the graded one of the section of line bundles on the variety. Finite generation of Cox rings achieves great importance in the minimal model program, since for varieties with this property the mentioned program can be carried out for any divisor. Note that recently has been proved \cite{bchm} the existence of minimal models for complex varieties of log general type and that the Cox ring of a Fano complex variety is finitely generated. With respect to surfaces, the fact that the Cox ring is finitely generated is related to invariant theory and the Hilbert's fourteen problem as one can see in \cite{cox16, tot10, to2}. The recent literature contains a number of papers concerning this issue \cite{gm,tot41,fa,al,ot,hw} and confirms that the classification of rational surfaces (and, of course, of varieties) with finitely generated Cox ring is a difficult problem.

Before explaining how this paper is organized, we notice that our results happen for a field
$k$ of any characteristic. For the field of complex numbers, in item a) of \cite[Theorem 1.4]{mon} the author gives a characterization of non positive valuations in terms of what he calls key forms. We show in Corollary \ref{mondal1} that his characterization is equivalent to that of  item (a) of our Theorem \ref{gordo1}. A similar situation holds for item b) of \cite[Theorem 1.4]{mon} and our characterization in item (a) of Theorem \ref{gordo2} (see Corollary \ref{mondal2}).

Apart from the introduction, this paper has two more sections. Section \ref{lados} is devoted to give the mentioned conditions characterizing when the valuation $\nu$ is non positive or negative on $k[x,y]\setminus k$. Proof of Theorem \ref{gordo1} requires two previous lemmas, where contact maximal values of the valuation are involved. This section also contains the above explained relation with recent results in the case of the complex field. Section \ref{latres} contains Theorem \ref{gordo3} and Corollary \ref{COX} on the characteristic cone and Cox ring of $X$. This last result gives a large class of Mori dream surfaces with arbitrarily large Picard number. These include infinite families of surfaces whose anticanonical Iitaka dimension is equal to $- \infty$ (see Example \ref{ejemplo}).

\section{Characterization of non-positive and negative divisorial valuations}
\label{lados}
\subsection{Plane divisorial valuations}
\label{pdv}
A valuation of a field $F$ is a surjective map $\nu: F^* (:=F \setminus\{0\}) \rightarrow G$, where $G$ is a totally ordered commutative group, such that $\nu(f+g) \geq \min \{\nu(f), \nu(g)\}$ and $\nu(fg) = \nu (f) + \nu (g)$, for $f,g \in F$. If $F$ is the quotient field of a local regular ring $(R,m)$, $\nu$ is said to be centered at $R$ whenever $R \cap m_\nu = m$, where $m_\nu = \{f \in F | \nu(f) >0 \} \cup \{0\}$ is the maximal ideal of the valuation ring $R_\nu = \{f \in F | \nu(f) \geq 0 \} \cup \{0\}$.

Set $\mathbb{P}^2 := \mathbb{P}^2_k$ the projective plane over an arbitrary algebraically closed field $k$ and $p$ a closed point in $\mathbb{P}^2$ as we said in the introduction. In this paper, $R$ will be the local ring ${\mathcal O}_{{\mathbb P}^2,p}$ at the point $p$. Valuations centered at $R$ will be called {\it plane valuations} and they are in 1-to-1 correspondence with simple sequences $\pi$ of point blowing-ups over $\gp^2$ \cite{ja20}. This means that we start by blowing-up $\gp^2$ at $p$ and, afterwards, each blowing-up has center in the last created exceptional divisor. Plane valuations can be classified in five types depending on the relative position of  the obtained exceptional divisors \cite{ja20}. Sequences $\pi$ need not to be finite and finite ones correspond with the so-called (plane) {\it divisorial valuations}. This designation  comes from the fact that they are defined by the last created exceptional divisor in its attached sequence $\pi$. The value group of a divisorial valuation is that of integer numbers and this type of plane valuations constitute the only one satisfying that the transcendence degree of the field $R_\nu / m_\nu$ over $k$ is $1$.

We will consider local coordinates $\{u,v\}$, $u=Y/X$ and $v=Z/X$, around the point $p=p_1=(1:0:0)$ of $\gp^2$. Set $\nu$ a divisorial valuation  of the quotient field of $R$ centered at $R$ and
\begin{equation}\label{blow}
\pi:   X = X_{m} \stackrel{\pi_{m}}{\longrightarrow}
X_{m-1} \longrightarrow \cdots \longrightarrow X_{1}
\stackrel{\pi_{1}}{\longrightarrow} X_0 = \mathbb{P}^2_k
\end{equation}
the simple sequence of point blowing-ups that $\nu$ defines. Here $\pi_1$ is the blowing-up of $\mathbb{P}^2_k$ at $p_1$ and $\pi_{i+1}$, $1 \leq i \leq m-1$, the blowing-up of $X_i$ at the unique point $p_{i+1}$ of the exceptional divisor defined by $\pi_i$, $E_i$, such that $\nu$ is centered at the local ring $\mathcal{O}_{X_i,p_{i+1}}$. Denote by $C_\nu := \{p_i\}_{i=1}^{m}$ the sequence (or configuration) of infinitely near points above defined; $p_i$ is named to be {\it proximate} to $p_j$ ($p_i \rightarrow p_j$) whenever $i>j$ and $p_i$ belongs either to $E_{j}$ or to the strict transform of $E_{j}$ on $X_{i-1}$. A point $p_i$ is {\it satellite} whenever there exists $j < i-1$ satisfying $p_i \rightarrow p_j$; otherwise, it is called {\it free}. The above mentioned relative position among (strict transforms on $X$ of) divisors given by $\pi$ is usually represented by the dual graph of $\nu$. This is  a tree where each vertex represents a divisor and two vertices are joined whenever their corresponding divisors meet. The dual graph is an equivalent datum to the structure of the Hamburger-Noether expansion of the valuation which provides parametric equations for it \cite{d-g-n}. Both of them  determine, and can be determined by, a finite sequence of positive rational numbers $\{\beta'_j\}_{j=0}^{g+1}$, usually called the Puiseux exponents of the valuation \cite[(1.5.2)]{d-g-n}.  In this paper, we will use this sequence and also another family of integer numbers $\{\overline{\beta}_j\}_{j=0}^{g+1}$, the so-called sequence of maximal contact values \cite[(1.5.3)]{d-g-n} (see \cite[Lemma 1.8]{d-g-n}). This last sequence spans the semigroup of values of $\nu$, $S(R):= \nu(R \setminus \{0\})$. In fact $\{\overline{\beta}_j\}_{j=0}^{g}$ is a minimal set of generators of $S(R)$ and $\overline{\beta}_{g+1}$ detects the number of free points after the last satellite one of $\pi$ and it will be an important value for us. We conclude by noticing that Puiseux exponents, contact maximal values and dual graph are data that can be obtained one from each one of the others \cite[Theorem 1.11]{d-g-n}.

\subsection{Cones and Cox ring}

This brief section will be devoted to introduce some objects that will appear in our results. Although they can be defined in general varieties, we will suppose that $X$ is a rational surface defined by the sequence $\pi$ given by a divisorial plane valuation $\nu$ as above. For a start, set Pic$(X)$ the Picard group of the surface $X$ and Pic$_\mathbb{Q} (X) = \mathrm{Pic} (X) \otimes_\mathbb{Z} \mathbb{Q}$ the corresponding vector space over the field of rational numbers. It is well-known that the intersection form extends to a bilinear pairing over Pic$_\mathbb{Q} (X)$ which will be denoted by $\cdot$. Denote by $E_0$ a line in $\gp^2$ that does not pass through the point $p$ and set $\{E_i\}_{i=0}^m$ (respectively,  $\{E_i^*\}_{i=0}^m$) the strict (respectively, total) transforms of the line $E_0$ and the exceptional divisors (also denoted $\{E_i\}_{i=1}^m$) on the surface $X$ through $\pi$. Denote $[E_i]$ (respectively,  $[E_i^*]$), $0 \leq i \leq m$, the class modulo linear (or numerical) equivalence on Pic$_\mathbb{Q} (X)$ of the mentioned divisors. Then $\{[E_i]\}_{i=0}^m$ and $\{[E_i^*]\}_{i=0}^m$ are bases of the vector space Pic$_\mathbb{Q} (X)$ and $E_i = E_i^* - \sum_{p_j \rightarrow p_i} E_j^*$ gives a change of basis in the $\mathbb{Q}$-vector space of divisors with exceptional support.

For us, the {\it cone of curves} of $X$, $NE(X)$, is defined as the convex cone of Pic$_\mathbb{Q} (X)$ generated by the classes in Pic$_\mathbb{Q} (X)$ of effective divisors on $X$. When one changes the sentence ``effective divisors" by ``nef divisors" (respectively, ``divisors defining base point free linear systems") one gets the concept of {\it nef cone} $\mathcal{P}(X)$ (respectively, {\it characteristic cone } $\tilde{\mathcal{P}}(X)$). Recall that $\tilde{\mathcal{P}}(X)$ is included in $\mathcal{P}(X)$, both cones have the same topological interior \cite{Kleiman} and  $\mathcal{P}(X)$ is the dual cone of $NE(X)$. Also notice that these cones are called to be regular if they are generated by some elements forming a basis of the $\mathbb{Z}$-module Pic$(X)$.

Denote $\mathbf{s}=(s_0,s_1, \ldots, s_m) \in \mathbb{Z}^{m+1}$, $D_\mathbf{s} := \sum_{i=0}^m s_i E_i$ and regard the vector spaces
\[
H^0 \left(X, \mathcal{O}_X (D_\mathbf{s})\right) = \{f \in k(X)\setminus \{0\} \;| \; \mathrm{div}_X(f) + D_\mathbf{s} \geq 0\} \cup \left\{0\right\}
\]
as $k$-vector subspaces of the function field $k(X)$ of $X$. Then the {\it Cox ring} of $X$ is defined as the graded $k$-subalgebra of $k(X)$
\[
\mathrm{Cox}(X) := \bigoplus_{\mathbf{s}\in \mathbb{Z}^{m+1} } H^0 \left(X, \mathcal{O}_X (D_\mathbf{s})\right),
\]
where we must notice that different bases of Pic$(X)$ give isomorphic (as $k$-algebra) Cox rings.

We conclude this section by recalling a concept which we will also use. This is that of Iitaka dimension, $\kappa(D)$, of a divisor $D$ on $X$:
$$\kappa(D):=\max\{\dim \varphi_{|nD|}(X)\mid n\in N(D)\},$$
where $N(D)=\{m\in \mathbb{Z}_{>0}\mid H^0(X,{\mathcal O}_X(mD))\not=0\}$, $\mathbb{Z}_{>0}$ denotes the set of positive integers, $\dim$ projective dimension and, for each $n\in N(D)$, $\varphi_{|nD|}(X)$ is the closure of the image of the rational map $\varphi_{|nD|}:X \cdots \rightarrow \mathbb{P}H^0(X,{\mathcal O}_X(nD))$ defined by the complete linear system $|nD|$. By convention, $\kappa(D)= - \infty$ whenever $|nD| = \emptyset$ for all $n >0$.

\subsection{The characterization}

Throughout this section, $\nu$ will be a (plane) divisorial valuation such that its corresponding blowing-up sequence is $\pi$ (see (\ref{blow})). For each index $i\in \{1,2,\ldots,m\}$, let $\varphi_i$ be an analytically irreducible germ of curve at $p$ whose strict transform on $X_{i}$ is transversal to $E_i$ at a non-singular point of the exceptional locus. Consider the family of divisors on $X$:
\begin{equation}
\label{Z}
D_i:=d_iE_0^*-\sum_{j=1}^m \mathrm{mult}_{p_j}(\varphi_i) E_j^*,
\end{equation}
$1 \leq i \leq m$, where $d_i$ denotes the intersection multiplicity at $p$ of the germ of the line $L$ at $p$ and  the germ $\varphi_i$, and $\mathrm{mult}_{p_j}(\varphi_i)$ the multiplicity at $p_j$ of the strict transform of $\varphi_i$ at $p_j$. Set $D_0:=E_0$; by Noether's formula it holds that
 $\{[D_i]\}_{i=0}^m$ is the dual basis, with respect to the above defined bilinear pairing, of the basis of Pic$_\mathbb{Q} (X)$ given by $\{[\tilde{L}]\}\cup \{[E_i]\}_{i=1}^m$,  where  $\tilde{L}$ denotes the strict transform of $L$ on $X$.

Fix an ample divisor $H$ on $X$ and define
$$Q(X):=\{x\in {\rm Pic}_{\mathbb{Q}}(X)\mid x^2\geq 0\; \mbox{ and }\; [H]\cdot x\geq 0\}.$$
Then, the following result happens:

\begin{lem}\label{lema1}
Let $D=dE_0^*-\sum_{i=1}^m r_i E_i^*$ be a divisor on $X$, with $d,r_i\in \mathbb{N}$ for all $i=1, 2, \ldots,m$, $\mathbb{N}$ being the nonnegative integer numbers, and such that $d\not=0$. Then, the following conditions are equivalent:
\begin{itemize}
\item[(a)] $[D]\cdot x\geq 0$ for all $x\in Q(X)$.
\item[(b)] $D^2\geq 0$.
\end{itemize}
\end{lem}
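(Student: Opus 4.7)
The plan is to reduce both implications to the self-duality of the positive cone of a surface under the intersection pairing. By the Hodge Index Theorem applied to $X$, the intersection form on $\mathrm{Pic}_\mathbb{Q}(X)$ has signature $(1,m)$. Consequently, $Q(X)$ is exactly the closed ``forward light cone'' determined by the positive class $[H]$, and a short diagonalization and Cauchy--Schwarz argument---a standard fact in signature-$(1,n)$ linear algebra---yields that $Q(X)$ is self-dual under the intersection form: for any $[D'] \in \mathrm{Pic}_\mathbb{Q}(X)$, one has $[D']\cdot x \geq 0$ for every $x \in Q(X)$ if and only if $[D'] \in Q(X)$. I will treat this self-duality as the main technical input.

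Granting self-duality, the direction (a) $\Rightarrow$ (b) is immediate: the hypothesis says exactly that $[D]$ lies in the dual of $Q(X)$, hence $[D] \in Q(X)$, and in particular $D^2 \geq 0$.

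For the converse (b) $\Rightarrow$ (a), by self-duality it again suffices to show $[D] \in Q(X)$. The condition $D^2 \geq 0$ places $[D]$ in the closed light cone, which has two halves; the task is to rule out that $[D]$ lies in $-Q(X)$. I would use $E_0^*$ as a witness: its class lies in the interior of $Q(X)$ because $(E_0^*)^2 = 1 > 0$, and $[E_0^*] \cdot [H] > 0$ since $E_0^*$ is a nonzero effective divisor---the total transform of a line disjoint from $p$---and $H$ is ample. By the dual-basis formula recalled just before the lemma, $[D] \cdot [E_0^*] = d$, which is strictly positive by the hypothesis $d \neq 0$. Since $[D]$ pairs strictly positively with an interior point of $Q(X)$, a signature-$(1,m)$ computation---equivalently, the fact that elements of $-Q(X)$ pair non-positively with elements of $Q(X)$---forces $[D] \in Q(X)$, and self-duality completes the argument.

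The only genuine difficulty I anticipate is the self-duality assertion for $Q(X)$; this is where the Hodge Index Theorem enters in an essential way, and the rest of the proof is formal. The role of the hypothesis $d \neq 0$ is precisely to select the correct half of the light cone containing $[D]$.
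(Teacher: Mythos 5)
Your proposal is correct, but it is organized around a different key input than the paper's argument. The paper works directly in the orthogonal basis $\{E_i^*\}_{i=0}^m$, in which the intersection form is $\mathrm{diag}(1,-1,\ldots,-1)$: after normalizing the $E_0^*$-coefficient of $x\in Q(X)$, condition (a) becomes the statement that the linear form $\sum_{i=1}^m r_i x_i$ is bounded above by $d$ on the Euclidean unit ball, whose maximum $\sqrt{\sum_{i=1}^m r_i^2}$ is computed by Lagrange multipliers (equivalently, Cauchy--Schwarz), so (a) holds if and only if $d^2\geq \sum_{i=1}^m r_i^2$, i.e.\ $D^2\geq 0$. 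You instead quote the self-duality of the closed forward light cone $Q(X)$ in signature $(1,m)$ and then use $[E_0^*]$ as a witness, with $[D]\cdot [E_0^*]=d>0$, together with the fact that classes in $-Q(X)$ pair non-positively with classes in $Q(X)$, to rule out $[D]\in -Q(X)$; granting the self-duality statement, both implications are then formal, and your framing isolates the role of the hypothesis $d\neq 0$ (selecting the correct component of the light cone) more transparently than the paper does. What you lose is self-containedness: the self-duality you treat as a black box is exactly the content the paper establishes by hand, since in the $E_i^*$-coordinates it reduces to the same Cauchy--Schwarz/optimization step, so the two proofs rest on the same inequality. One small slip worth fixing: the identity $[D]\cdot [E_0^*]=d$ does not follow from the dual-basis formula recalled before the lemma (that formula concerns $\{[D_i]\}$, which is dual to $\{[\tilde{L}]\}\cup\{[E_i]\}_{i=1}^m$); it follows from the orthogonality relations $(E_0^*)^2=1$, $E_0^*\cdot E_i^*=0$ and $E_i^*\cdot E_j^*=-\delta_{ij}$ for $i,j\geq 1$. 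This does not affect the validity of your argument.
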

\begin{proof}

Notice that $[D]\cdot x\geq 0$ for all $x\in Q(X)$ if and only if $\sum_{i=1}^m r_i x_i\leq d$ for all $(x_1,x_2,\dots,x_m)\in \mathbb{Q}^m$ such that $\sum_{i=1}^m x_i^2\leq 1$. Consider the map $f: \mathbb{R}^m \rightarrow \mathbb{R}$ given by $f(x_1,x_2,\ldots,x_m):=\sum_{i=1}^m r_i x_i$; applying the Lagrange multipliers method it is straightforward to check that $\sqrt{\sum_{i=1}^m r_i^2}$ is the maximum value that $f$ takes in the set
$$\left\{(x_1,\ldots,x_m) \in \mathbb{R}^m \mid \sum_{i=1}^m x_i^2\leq 1\right\}.$$

As a consequence,  $[D]\cdot x\geq 0$ for all $x\in Q(X)$ if, and only if, $\sqrt{\sum_{i=1}^m r_i^2}\leq d$, that is, if and only if  $D^2\geq 0$.

\end{proof}

Let $\{\betabarra_i\}_{i=0}^{g+1}$ be the sequence of maximal contact values of $\nu$, notice that $g$ is the number of characteristic pairs of a general element of $\nu$. This sequence will be useful in the proof of the following result.

\begin{lem}\label{lema2}
Let $\nu$ be a plane (divisorial) valuation of the quotient field of $R\; (:= \mathcal{O}_{\mathbb{P}^2,p})$ centered at $R $ and $X$ the surface that it defines through $\pi$. Consider the family of divisors on $X$ given in (\ref{Z}). Then $D_0^2=1$, $D_1^2=0$ and, for any index $i\in \{2, 3, \ldots,m\}$ such that $D_i^2\geq 0$, the following properties are satisfied:
\begin{itemize}
\item[(a)] If $p_i$ is a satellite point of the configuration $ \mathcal{C}_\nu$ that  defines $\pi$, it holds $D_i^2>0$.
\item[(b)]  $D_{i-1}^2\geq 0$ and, moreover, if $D_{i-1}^2=0$ then either $i=2$, or $p_i$ is satellite and $p_{i-1}$ is free.
\end{itemize}
\end{lem}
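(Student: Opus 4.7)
My plan starts from the explicit expression obtained by evaluating the self-intersection of $D_i = d_i E_0^* - \sum_{j=1}^m \mathrm{mult}_{p_j}(\varphi_i) E_j^*$ in the orthogonal basis $\{[E_0^*], [E_1^*], \ldots, [E_m^*]\}$ of $\mathrm{Pic}_{\mathbb{Q}}(X)$, for which $(E_0^*)^2 = 1$, $(E_j^*)^2 = -1$ for $j \geq 1$, and all mixed products vanish. This yields
\[
D_i^2 = d_i^2 - \sum_{j=1}^m \mathrm{mult}_{p_j}(\varphi_i)^2.
\]
Applied to $D_0 = E_0$ (a line not passing through $p$), this gives $D_0^2 = 1$ at once. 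For $D_1$, the transversality requirement on the strict transform forces $\varphi_1$ to be smooth at $p$, so that $\mathrm{mult}_{p_1}(\varphi_1) = 1$ and $\mathrm{mult}_{p_j}(\varphi_1) = 0$ for $j \geq 2$; one may further take $\varphi_1$ transversal to $L$, whence $d_1 = (L \cdot \varphi_1)_p = 1$ and $D_1^2 = 1-1 = 0$.

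For $i \geq 2$, I would compare $D_i$ with $D_{i-1}$ by choosing the curvettes $\varphi_{i-1}$ and $\varphi_i$ coherently with the blowup $X_i \to X_{i-1}$: the strict transform of $\varphi_i$ on $X_{i-1}$ must pass through $p_i$ smoothly, while that of $\varphi_{i-1}$ is transversal to $E_{i-1}$ at another smooth point. Then Noether's formula
\[
d_i = (L \cdot \varphi_i)_p = \sum_{j=1}^m \mathrm{mult}_{p_j}(L) \cdot \mathrm{mult}_{p_j}(\varphi_i),
\]
combined with the Enriques proximity equalities satisfied by the multiplicity sequence of an irreducible germ, allows me to express $D_i^2 - D_{i-1}^2$ in terms of the local proximity structure at $p_i$ and the position of $L$ with respect to the valuation's sequence of centers.

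Part (a) follows by observing that if $p_i$ is satellite, there exists $r < i-1$ with $p_i \to p_r$, and the proximity equality at $p_r$ picks up an additional contribution from $\mathrm{mult}_{p_i}(\varphi_i) = 1$; tracing this contribution through the formula above shows the equality $D_i^2 = 0$ cannot occur, so $D_i^2 \geq 0$ forces $D_i^2 > 0$. For (b), a case-by-case comparison of multiplicity sequences gives the desired inequality: when $p_i$ is free one checks $D_{i-1}^2 \geq D_i^2$, with strict inequality unless $i = 2$; when $p_i$ is satellite one obtains $D_{i-1}^2 > D_i^2 \geq 0$ unless $p_{i-1}$ is free, which is precisely the remaining exceptional case.

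The hard part will be the bookkeeping of how the sequences $\{\mathrm{mult}_{p_j}(\varphi_i)\}_j$ and $\{\mathrm{mult}_{p_j}(\varphi_{i-1})\}_j$ differ across the various free/satellite configurations, while in parallel tracking how $d_i$ and $d_{i-1}$ depend on which centers of $\pi$ lie on the strict transform of $L$. A further subtlety is that the curvettes $\varphi_i$ are not unique, so one must either verify that the relevant quantities are choice-independent or fix a canonical choice compatible with the proximity structure before the comparison becomes clean.
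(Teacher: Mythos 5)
Your setup is fine and coincides with the paper's starting point: $D_i^2=d_i^2-\sum_j \mathrm{mult}_{p_j}(\varphi_i)^2$, the computations $D_0^2=1$, $D_1^2=0$, and the observation that $D_i^2$ is choice-independent (indeed $[D_i]$ is a dual basis element, so nothing needs to be fixed canonically). The gap is in the core of (a) and (b), where your plan is only a promise that proximity bookkeeping will work, and the specific intermediate statements you propose are false. For (a), ``the proximity equality at $p_r$ picks up an additional contribution \dots shows the equality $D_i^2=0$ cannot occur'' identifies no mechanism: for \emph{free} points the equality $d_i^2=\sum_j\mathrm{mult}_{p_j}(\varphi_i)^2$ can and does occur (already for $D_1$, and for any valuation following a curve with one place at infinity), so whatever rules it out for satellite points must be genuinely arithmetic, not the mere presence of an extra term. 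The paper's argument is exactly that: for $p_i$ satellite one has $\sum_j\mathrm{mult}_{p_j}(\varphi_i)^2=\bar{\beta}_{g+1}=e_{g-1}\bar{\beta}_g$ with $\gcd(e_{g-1},\bar{\beta}_g)=1$ and $e_{g-1}\geq 2$, and $d_i^2=e_{g-1}\bar{\beta}_g$ is then impossible by a coprimality/perfect-square argument. Nothing of this kind appears in your proposal.

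For (b), the comparisons you intend to prove are not true. Take $L$ passing through three free points $p_1,p_2,p_3$ of the configuration: then $D_3^2=9-3=6$ and $D_2^2=4-2=2$, so ``$D_{i-1}^2\geq D_i^2$ when $p_i$ is free, strict unless $i=2$'' fails at $i=3$. Take the valuation whose curvette is $y^3=x^4$ (maximal contact values $3,4,12$; $p_3,p_4$ both satellite) with $L$ through $p_1,p_2$: then $D_4^2=16-12=4$ and $D_3^2=9-6=3$, so ``$D_{i-1}^2>D_i^2$ when $p_i$ and $p_{i-1}$ are satellite'' also fails (note the lemma's actual conclusion, $D_3^2>0$, does hold). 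So the monotonicity route collapses even before the bookkeeping. What the paper does instead is: in the free case show directly $D_{i-1}^2>0$ for $i\geq 3$ via Noether's formula (when $p_i$ is free the multiplicity sequences of $\varphi_i$ and $\varphi_{i-1}$ agree at $p_1,\dots,p_{i-1}$); in the satellite case compare the maximal contact values of the valuations of $E_i$ and $E_{i-1}$ through their Puiseux exponents, proving the continued-fraction identity $|\hat{\bar{\beta}}_g-e\bar{\beta}_g|=1/e_{g-1}$ when $p_{i-1}$ is also satellite, and the relation $D_{i-1}^2=\tfrac14 D_i^2-\tfrac12$ when $p_{i-1}$ is free, concluding in both cases by integrality/divisibility of $D_i^2$. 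This arithmetic of the semigroup generators is the actual content of the lemma and is absent from your plan.
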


\begin{proof}
The equalities $D_0^2=1$ and $D_1^2=0$ are straightforward and, without loosing generality, we can assume that $i=m\geq 2$. Define the sequence of positive integer numbers $\{e_j\}_{j=0}^g$, where $e_j:= \gcd(\betabarra_0,\betabarra_1,\ldots, \betabarra_j)$ (notice that $e_g=1$).

First, we are going to prove item (a).  Reasoning by contradiction, assume that $D_m^2=0$. Since the point $p_m$ is satellite, $\betabarra_{g+1} = e_{g-1}\betabarra_g$ and one gets
$$0=D_m^2=d_m^2-e_{g-1}\betabarra_g=e_{g-1}\left[\frac{d_m}{e_{g-1}}d_m-\betabarra_g\right].$$
Taking into account that $\gcd(\betabarra_0,\betabarra_1,\ldots, \betabarra_g)=1$, the above equality leads to a contradiction because $e_{g-1}$ divides $d_m$ and $\gcd(e_{g-1}, \betabarra_g)=1$.

To prove item (b), notice that, when $p_m$ is a free point and $m\geq 3$, then  $D_{m-1}^2>0$ by Noether's formula. Therefore we will assume that the point $p_m$ is satellite. Denote by $\hat{\nu}$ the divisorial valuation defined by the divisor $E_{m-1}$.
Let $\{\hat{\betabarra}_j\}_{j=0}^{\hat{g}+1}$ be the sequence of maximal contact values of $\hat{\nu}$ and set $\hat{e}_j:=\gcd(\hat{\betabarra}_0,\ldots, \hat{\betabarra}_j)$ for all $j\in \{0,1,\ldots,\hat{g}\}$ and $e:=\hat{e}_{\hat{g}-1}/e_{\hat{g}-1}$. Notice that $g\geq 1$ (because $p_m$ is a satellite point) and either $g=\hat{g}$ or $\hat{g}=g-1$.

First assume that $g=\hat{g}$ (which means that $p_{m-1}$ is also a satellite point). The following equality we are going to prove will be useful.
\begin{equation}
\label{eee}
| \hat{\bb}_{g} - e \bb_{g}|=\frac{1}{e_{g-1}}.
\end{equation}
Indeed, consider the sequences of Puiseux exponents $\{ \beta'_j \}_{j=0}^{g+1}$  and $\{\hat{\beta}'_j\}_{j=0}^{g+1}$ corresponding with $\nu$ and $\hat{\nu}$, respectively. Notice that these exponents can be obtained from continued fractions determined by the dual graphs (or the structures of the Hamburger-Noether expansions) of the valuations $\nu$ and $\hat{\nu}$. By \cite[Lemma 1.8]{d-g-n}, it holds that
\[
\beta'_g=\frac{\bb_g - N_{g-1} \bb_{g-1}}{e_{g-1}} \;\;\; \mathrm{and} \;\;\; \hat{\beta}'_g=\frac{\hat{\bb}_g - \hat{N}_{g-1} \hat{\bb}_{g-1}}{\hat{e}_{g-1}},
\]
where $N_0:=0$ (respectively, $\hat{N}_0:=0$) and $N_j:=\frac{e_{j-1}}{e_j}$ (respectively, $\hat{N}_j:=\frac{\hat{e}_{j-1}}{\hat{e}_j}$) for $j\in \{1,\ldots,g\}$.
Since $N_{g-1} \bb_{g-1}/ e_{g-1} = \hat{N}_{g-1} \hat{\bb}_{g-1} / \hat{e}_{g-1}$, it happens that
\[
\frac{\hat{\bb}_{g} - e \bb_{g}}{\hat{e}_{g-1}} = \hat{\beta}'_g - \beta'_g.
\]

The values $\hat{e}_{g-1}$ and ${e}_{g-1}$ are reduced denominators for $\hat{\beta}'_g$ and $\beta'_g$ and, moreover, if $\langle a_0;a_1,\ldots, a_r\rangle=\langle a_0;a_1,\ldots, a_r-1,1\rangle$ are the two equivalent expressions of $\beta'_g$ as continued fractions (following the notation of \cite[Section 7.2]{ni-zu}), it holds that $\hat{\beta}'_g=\langle a_0;a_1,\ldots, a_r-1\rangle$ and then
$$|\hat{\beta}'_g - \beta'_g | =\frac{1}{\hat{e}_{g-1} {e}_{g-1}}$$
by \cite[Theorem 7.5]{ni-zu}, which proves (\ref{eee}).

Now,
$$D_m^2=d_m^2-e_{g-1}\betabarra_g\geq 0\;\;\mbox{ and }\;\; D_{m-1}^2=e^2d_m^2-\hat{e}_{g-1}\hat{\betabarra}_g$$
because $\betabarra_{g+1}=e_{g-1}\betabarra_g$, $\hat{\betabarra}_{g+1}=\hat{e}_{g-1}\hat{\betabarra}_g$,  since $\nu$ and $\hat{\nu}$ are defined by satellite points, and $d_{m-1}=e\; d_m$. Then
$$D_{m-1}^2=
e^2 \left[d_m^2-\frac{e_{g-1}\hat{\betabarra}_g}{e}\right]\geq e^2\left[d_m^2-e_{g-1}\betabarra_g-\frac{1}{e}\right]=e^2\; \left[D_m^2-\frac{1}{e}\right]> 0,$$
where the first inequality is a consequence of (\ref{eee}) and the last one holds because $D_m^2$ is the product of $e_{g-1}$ by an integer (that must be positive by item (a)) and $1/e<e_{g-1}$.

Finally, assume that $\hat{g}=g-1$. This case occurs if and only if $P_{m-1}$ is free. Therefore $e_{g-1}=2$, and, using Noether's formula, it is easy to deduce that $d_{m-1}=d_m/2$ and
$$\hat{\betabarra}_{\hat{g}+1}=\frac{\betabarra_{g+1}+2}{4}.$$
Using this equality, we have that
$$D_{m-1}^2=\frac{1}{4}\left[d_m^2-\betabarra_{g+1}-2  \right]=\frac{1}{4} D_m^2-\frac{1}{2}.$$
Taking into account that $D_m^2$ must be greater than or equal to $2$ because $D_{m-1}^2$ is an integer, we conclude $D_{m-1}^2\geq 0$ and the result is proved.

\end{proof}

\begin{rem}\label{remark1}
{\rm
Lemma \ref{lema2} shows that, under the assumption $D_m^2\geq 0$, it holds that $D_i^2\geq 0$ for all $i\in \{0,\ldots,m-1\}$ and moreover the divisors $D_i$ whose self-intersection is equal to zero are among those such that $p_i$ is a free point.

}
\end{rem}

Now we are ready to state and prove our first theorem, which provides equivalent statements to the fact that the cone of curves of the rational surface $X$ defined by a valuation $\nu$ as above is regular. Recall that $\mathbb{P}^2$ is the projective plane over an algebraically closed field $k$, $p=p_1=(1:0:0)$, $R$ the local ring $\mathcal{O}_{\mathbb{P}^2,p}$, $L$ is the line at infinity (defined by the equation $Z=0$) and $\{x,y\}$ affine coordinates in the chart $Z \neq 0$.

\begin{teo}\label{gordo1}
Let $\nu$ be a plane divisorial valuation of the quotient field of $R$ centered at $R$. Set $X$ the surface that it defines via its attached sequence  of point blowing-ups $\pi$ (\ref{blow}). Consider  the divisor  $D_m$  defined in (\ref{Z}) and $\bb_{g+1}$ the last maximal contact value of $\nu$.  Then, the  following conditions are equivalent:
\begin{itemize}
\item[(a)] $d_m^2\geq \betabarra_{g+1}$.
\item[(b)] The cone $NE(X)$ is regular.
\item[(c)] $D_m$ is a nef divisor.
\item[(d)] $\nu(f)\leq 0$ for all $f\in k[x,y]$.

\end{itemize}

\end{teo}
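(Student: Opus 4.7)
The plan is to establish the equivalences via the cycle (c) $\Leftrightarrow$ (d), (c) $\Rightarrow$ (a), (b) $\Rightarrow$ (c), and (a) $\Rightarrow$ (b), with Lemmas \ref{lema1} and \ref{lema2} playing an essential role. Two preliminary identities drive the argument. First, the orthogonality $(E_0^*)^2 = 1$, $(E_j^*)^2 = -1$ for $j \geq 1$, and $E_i^* \cdot E_j^* = 0$ for $i \neq j$, combined with the Noether-type formula $\nu(\varphi_m) = \sum_j \mult_{p_j}(\varphi_m)^2 = \bb_{g+1}$, gives $D_m^2 = d_m^2 - \bb_{g+1}$, so that (a) is equivalent to $D_m^2 \geq 0$. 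Second, for a nonzero polynomial $f \in k[x,y]$ of degree $d$ with projective closure $C_f$, a local computation at $p$ in the coordinates $u = Y/X$, $v = Z/X$, using $\nu(v) = (L \cdot \varphi_m)_p = d_m$ and Noether's formula $\nu(g) = \sum_j \mult_{p_j}(\varphi_m)\,\mult_{p_j}(C_f)$ for $g$ a local equation of $C_f$ at $p$, yields the key bridge
\[
\nu(f) = -D_m \cdot [\tilde{C}_f],
\]
where $\tilde{C}_f$ is the strict transform of $C_f$ on $X$.

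From this bridge, (c) $\Leftrightarrow$ (d) follows at once: every irreducible curve on $X$ is $\tilde{L}$, some $E_j$, or the strict transform of an irreducible plane curve $C \neq L$, and the duality of bases gives $D_m \cdot \tilde{L} = 0$ and $D_m \cdot E_j = \delta_{j,m}$, so $D_m$ is nef precisely when $\nu(f) \leq 0$ for every $f \in k[x,y]$. The implication (c) $\Rightarrow$ (a) is the standard fact that a nef divisor on a smooth projective surface has nonnegative self-intersection (perturb by $\epsilon H$ ample and let $\epsilon \to 0$). For (b) $\Rightarrow$ (c), since $E_1, \ldots, E_m$ are irreducible curves of negative self-intersection and $\tilde{L}$ is either of negative self-intersection or moves as a fiber of a pencil, each is extremal in $NE(X)$; if (b) holds their classes constitute the basis of $\mathrm{Pic}(X)$ generating $NE(X)$, and the nef cone, being the dual cone, is generated by $\{[D_0], \ldots, [D_m]\}$, so $D_m$ is nef.

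The main obstacle is (a) $\Rightarrow$ (b). By iterating Lemma \ref{lema2}(b) under hypothesis (a) one obtains $D_i^2 \geq 0$ for every $0 \leq i \leq m$, and Lemma \ref{lema1} then gives $D_i \cdot x \geq 0$ for every $x \in Q(X)$. To show $NE(X) = \mathrm{cone}([\tilde{L}], [E_1], \ldots, [E_m])$, take an arbitrary irreducible curve $\tilde{C}$ other than $\tilde{L}$ and each $E_j$, realized as the strict transform of a plane curve of degree $d \geq 1$ with multiplicities $m_j$ at the $p_j$'s; writing $[\tilde{C}] = d[\tilde{L}] + \sum_i c_i[E_i]$ one has $c_i = D_i \cdot \tilde{C}$, and a Cauchy-Schwarz estimate $\sum_k m_k\,\mult_{p_k}(\varphi_i) \leq \sqrt{\sum_k m_k^2}\,d_i$ (using $D_i^2 \geq 0$) yields $c_i \geq 0$ whenever $\tilde{C}^2 \geq 0$.

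The delicate step, where the bulk of the work lies, is to rule out irreducible plane curves $C \neq L$ whose strict transforms satisfy $\tilde{C}^2 < 0$. Such a curve would force $\sum m_k^2 > d^2$; the plan is to combine this with the adjunction-genus bound $\sum m_k(m_k - 1) \leq (d-1)(d-2)$, the proximity inequalities on the infinitely near configuration $\mathcal{C}_\nu$, and the hypothesis $d_m^2 \geq \bb_{g+1}$ to force $\sum m_k^2 \leq d^2$, yielding the required contradiction and closing the cycle.
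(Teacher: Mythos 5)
Your bridge $\nu(f)=-D_m\cdot[\tilde{C}_f]$, the equivalence (c) $\Leftrightarrow$ (d), the implication (c) $\Rightarrow$ (a), and the Cauchy--Schwarz treatment of curves with $\tilde{C}^2\geq 0$ (which is exactly the content of Lemma \ref{lema1} combined with the iteration of Lemma \ref{lema2}) all match the paper in substance. The problem is that the crucial implication (a) $\Rightarrow$ (b) is not actually proved: you reduce it to the claim that, under $d_m^2\geq\bb_{g+1}$, the surface $X$ carries no irreducible curve of negative self-intersection other than $\tilde{L},E_1,\ldots,E_m$, and for this ``delicate step'' you only announce a plan (adjunction/genus bound, proximity inequalities, and hypothesis (a)). No mechanism is given for how (a) enters, and the tools you list cannot suffice by themselves: the adjunction bound and the proximity inequalities make no reference to $L$, whereas the realizability of a numerical type $(d;m_1,\ldots,m_m)$ with $\sum_k m_k^2>d^2$ by an irreducible curve \emph{different from} $L$ depends entirely on how the configuration $\mathcal{C}_\nu$ sits on $L$ and its strict transforms. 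For instance, the type $d=1$, $m_1=m_2=1$ always satisfies adjunction and proximity; it is realized by a curve other than $L$ exactly when $p_2\notin\tilde{L}$, and ruling this out is equivalent to exploiting $d_m=(L\cdot\varphi_m)_p$ through a B\'ezout-type argument with respect to $L$, which your sketch never invokes. Since this is the heart of the theorem, the proof is incomplete, and it is doubtful it can be completed along the announced lines without importing precisely the intersection-theoretic input you omit.

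The paper's proof shows that the whole detour through negative curves is unnecessary. Setting $S_1:=\mathrm{cone}([\tilde{L}],[E_1],\ldots,[E_m])$, \emph{every} irreducible curve other than $\tilde{L},E_1,\ldots,E_m$ meets each of these generators nonnegatively, hence its class lies in the dual cone $S_1^{\vee}=\mathrm{cone}([D_0],\ldots,[D_m])$, regardless of the sign of its self-intersection; so $NE(X)\subseteq S_1+S_1^{\vee}$. Then Lemma \ref{lema2} gives $D_i^2\geq 0$ for all $i$, so $D_i\in Q(X)\subseteq\overline{NE}(X)$, while Lemma \ref{lema1} gives $Q(X)\subseteq(S_1^{\vee})^{\vee}=S_1$; hence $S_1^{\vee}\subseteq S_1$, $\overline{NE}(X)=S_1$, and finally $NE(X)=S_1$ because $S_1$ is closed and spanned by effective classes. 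No curve-by-curve analysis and no exclusion of negative curves is needed. A smaller but genuine defect is your argument for (b) $\Rightarrow$ (c): when $\tilde{L}^2=0$ the class of a fiber of the pencil of lines through $p$ is \emph{not} extremal as soon as some member of the pencil becomes reducible on $X$ (which happens whenever $m\geq 2$ and only $p_1$ lies on $L$), so ``moves as a fiber of a pencil'' does not justify extremality of $[\tilde{L}]$; the identification of the extremal rays of a regular $NE(X)$ with $[\tilde{L}],[E_1],\ldots,[E_m]$ requires a separate argument in that degenerate case.
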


\begin{proof}
Let $S_1$ be the convex cone of Pic$_\mathbb{Q} (X)$ spanned by the classes $[\tilde{L}], [E_1],[E_2],\ldots,[E_m]$ and recall that its dual cone
$S_1^{\vee}$ is spanned by $[D_0],[D_1],\ldots, [D_m]$, where $D_i$ are the divisors defined in (\ref{Z}).
Statement (a) is equivalent to say that the divisor $D_m$ has non-negative self-intersection. Assume that it holds and let us prove Statement (b).  By Lemma \ref{lema2}, $D_i\in Q(X)$ for all $i\in \{0, 1, \ldots,m\}$, $Q(X)$ being the set defined before Lemma \ref{lema1}.
Then, setting $\overline{NE}(X)$ the topological closure of the cone $NE(X)$, it holds, on the one hand, that $S_1^{\vee}\subseteq \overline{NE}(X)$ because $Q(X)\subseteq \overline{NE}(X)$. On the other hand,
any class of an irreducible curve on $X$ different from $[\tilde{L}]$, $[E_1], [E_2],\ldots,[E_m]$ must belong to $S_1^{\vee}$. Thus, one gets
$$NE(X)\subseteq S_1+S_1^{\vee}\subseteq \overline{NE}(X),$$
where $+$ means Minkowski sum. Taking topological closures, the equality $\overline{NE}(X)=S_1+S_1^{\vee}$ is obtained. Lemma \ref{lema1} implies that $Q(X)\subseteq (S_1^{\vee})^{\vee}=S_1$, since $D_i^2\geq 0$ for all $i=0, 1, \ldots,m$. Therefore, $S_1^{\vee}\subset S_1$ because $D_i\in Q(X)$ for all $i$ and, hence, $\overline{NE}(X)=S_1$. Since $S_1$ is finitely generated by effective classes we conclude that $NE(X)=S_1$, that is a regular cone.

Now, we are going to show that Statement (b) implies Statement (c). If the cone $NE(X)$ is regular, it is clear that it must be generated by the classes of $\tilde{L}$ and the strict transforms of the exceptional divisors. Then $D_m$ is nef because $D_m\cdot \tilde{L}=0$, $D_m\cdot E_i=0$ for $i=1, 2, \ldots,m-1$ and $D_m\cdot E_m=1$.

Next, let us show that statements (c) and (d) are equivalent. Firstly notice that $\nu$ takes negative values for all powers of $x$ with positive exponent. If $f(x,y)\in k[x,y]\setminus k$ and $x$ does not divide $f$, one has that
\begin{equation}\label{www}
f(x,y)=f(1/v,u/v)=\frac{h_f(u,v)}{v^{\deg{(h_f)}}},
\end{equation}
for some polynomial $h_f(u,v)\in k[u,v]$ which corresponds with $f$.  Consider $f\in k[x,y]$ an denote by  $\Phi(f)$ the projective plane curve of degree $\deg{(h_f)}$ defined by the homogeneous polynomial $X^{\deg{(h_f)}}h_f(Y/X,Z/X)$. Set $W$ the projective line of $\gp^2$ with equation $X=0$, then
$\Phi$ defines a one-to-one correspondence between the set $\Gamma$ of non-constant polynomials in $k[x,y]$ (modulo multiplication by a nonzero element of $k$) such that $x$ does not divide them and the set of projective curves of $\mathbb{P}^2$ which have neither the line $L$ nor the line $W$ as components. Then, using Equality (\ref{www}) and Noether formula, we get that for all $f\in \Gamma$,
$$-\nu(f)=D_m\cdot \tilde{\Phi}(f),$$
$\tilde{\Phi}(f)$ being the strict transform of the curve $\Phi(f)$ on $X$. Now, taking also into account that $$D_m\cdot \tilde{L}=0, \;\;\; D_m \cdot\tilde{W}=d_m>0$$ and $D_m\cdot E_i\geq 0$ for all $i\in \{1, 2,\ldots,m\}$, it happens that
$\nu(f)\leq 0$ for all $f\in k[x,y]$ if and only if the divisor $D_m$ is nef.

The above reasoning finishes the proof because the fact that Statement (c) implies (a) is straightforward.

\end{proof}

Lemma \ref{lema2} and the equivalence between statements (a) and (d) of Theorem \ref{gordo1} prove the following result.

\begin{cor}\label{grrr}
With the same notations as in Theorem \ref{gordo1}, assume that $\nu(f)\leq 0$ for all $f\in k[x,y]$ and set $\nu_i$ the divisorial valuation defined by the divisor $E_i$, $i\in \{1, 2, \ldots,m-1\}$. Then $\nu_i(f)\leq 0$ for all $f\in k[x,y]$.
\end{cor}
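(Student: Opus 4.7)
The plan is to transport the hypothesis across the equivalence (d)$\Leftrightarrow$(a) of Theorem \ref{gordo1}, use the monotonicity of self-intersections supplied by Remark \ref{remark1} (which packages Lemma \ref{lema2}) to propagate the resulting condition down the sequence of blowing-ups, and finally come back through the same equivalence applied to each $\nu_i$. Concretely, the first step is to invoke (d)$\Rightarrow$(a) for $\nu$, which gives $d_m^2\geq \bb_{g+1}$ and hence $D_m^2\geq 0$; Remark \ref{remark1} then delivers $D_i^2\geq 0$ for every $i\in\{0,1,\ldots,m-1\}$.

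The key step is to read these inequalities as condition (a) of Theorem \ref{gordo1} applied to the valuation $\nu_i$ on its own surface $X_i$. By the very choice of $\varphi_i$ in (\ref{Z}), its strict transform on $X_i$ is transversal to $E_i$ at a smooth point of the exceptional locus, so $\varphi_i$ is also a valid curvette for $\nu_i$ and one can form on $X_i$ the analogue $D_i^{(i)}$ of the divisor prescribed by (\ref{Z}) for $\nu_i$. Since $\mult_{p_j}(\varphi_i)=0$ for $j>i$ and since total transforms of exceptional divisors pull back to total transforms under the later blowing-ups $\pi_{i+1},\ldots,\pi_m$, one finds that $D_i$ on $X$ is nothing but the pullback of $D_i^{(i)}$ from $X_i$. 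Self-intersections are preserved under pullback, so $(D_i^{(i)})^2=D_i^2\geq 0$, and this is precisely condition (a) of Theorem \ref{gordo1} for $\nu_i$. Applying (a)$\Rightarrow$(d) of that theorem to $\nu_i$ then yields $\nu_i(f)\leq 0$ for every $f\in k[x,y]$.

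The only step that needs any real verification beyond quoting earlier results is the identification of $D_i$ on $X$ with the pullback of $D_i^{(i)}$ from $X_i$, and this is a routine calculation with total transforms under blowing-ups; no substantial obstacle is expected.
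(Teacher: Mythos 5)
Your proposal is correct and follows essentially the same route as the paper: transfer the hypothesis to condition (a) of Theorem \ref{gordo1} (i.e.\ $D_m^2\geq 0$), use Lemma \ref{lema2}/Remark \ref{remark1} to get $D_i^2\geq 0$ for all $i$, and apply the equivalence (a)$\Leftrightarrow$(d) to each $\nu_i$. Your explicit identification of $D_i$ on $X$ with the pullback of the corresponding divisor for $\nu_i$ on $X_i$ (valid since the curvette $\varphi_i$ misses the later centers) is exactly the step the paper leaves implicit, so this is a faithful, slightly more detailed version of the paper's one-line argument.
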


\begin{rem} \label{llados}
{\rm

Statement (b) of Theorem \ref{gordo1} is equivalent to the fact that the semigroup of effective classes in ${\rm Pic}(X)$ is spanned by the classes of $\tilde{L},E_1, E_2, \ldots, E_m$.
}
\end{rem}

\begin{rem}\label{remark3}
{\rm
Taking into account the above corollary, it holds that any of the statements in Theorem \ref{gordo1} implies that the divisor $D_j$ is effective for all $j\in \{1,2,\ldots,m\}$. Indeed, one can write $D_j$ in the form
$$D_j=(D_j\cdot D_0)\tilde{L}+\sum_{i=1}^m (D_j\cdot D_i)E_i,$$
and $D_j\cdot D_i\geq 0$ for all $0 \leq i \leq m$  because $D_j$ is nef and $D_i$ belongs to $NE(X)$ for all $i\in \{0,1,\ldots,m\}$ (it suffices to consider items (b) and (c) of Theorem \ref{gordo1} and the inclusion $Q(X)\subseteq \overline{NE}(X)$).

Moreover, and as a consequence of what we have said, it holds that the Iitaka dimension, $\kappa(D_j)$, of the divisor $D_j$ satisfies $\kappa(D_j)\in \{0,1,2\}$. Furthermore, $\kappa(D_j)=2$ if and only if $D_j$ is big, that is, if and only if $D_j^2>0$.
}
\end{rem}

\begin{rem}
{\rm The equivalence between items (a) and (c) of Theorem 1 has been proved in \cite[Proposition 9.10]{j}. Assume now that $k = \mathbb{C}$ the field of complex numbers. A characterization of the fact $\nu(f) \leq 0$ for all $f \in \mathbb{C}[x,y]$ is given in \cite{mon} in terms of what the author calls key forms of $\nu$. These are sequences $\{g_j\}_{j=0}^{n+1}$ of elements in $\mathbb{C}[x,x^{-1},y]$ that allows us to compute $- \nu(g)$, $g \in \mathbb{C}[x,x^{-1},y]$, as the minimum of certain weighted degrees
of polynomials in $n+2$ variables  $G$ such that $G(g_0,g_1, \ldots, g_{n+1})= g$. Fixed $\{x,y\}$, $-\nu$ admits a unique sequence of key forms. The mentioned characterization states that $\nu(f) \leq 0$ for all $f \in \mathbb{C}[x,y]$ if, and only if, $-\nu (g_{n+1}) \geq 0$ \cite[Theorem 1.7, item 1]{mon}. Next result shows that for the field $k=\mathbb{C}$ that characterization is equivalent to the one we state in item (a) of our Theorem \ref{gordo1}.
}
\end{rem}

\begin{cor}
\label{mondal1}
Set $k=\mathbb{C}$ and $\nu$ a plane divisorial valuation as above. With the same notation as in Theorem \ref{gordo1} and in the previous remark: $-\nu (g_{n+1}) \geq 0$ if and only if $d_m^2\geq \betabarra_{g+1}$.
\end{cor}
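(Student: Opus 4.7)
The plan is to observe that this corollary is an immediate formal consequence of two equivalences that are already available: our Theorem \ref{gordo1} on one side, and Mondal's criterion \cite[Theorem 1.7, item 1]{mon} on the other. Both statements characterize the same global condition on $\nu$, so the corollary follows by transitivity.

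More concretely, first I would invoke Theorem \ref{gordo1} (specifically the equivalence (a) $\Leftrightarrow$ (d)) to rewrite the numerical inequality $d_m^2 \geq \bar{\beta}_{g+1}$ as the statement that $\nu(f)\leq 0$ for every $f \in \mathbb{C}[x,y]$ (using $k=\mathbb{C}$). Then I would recall the characterization proved by Mondal: in the notation of the preceding remark, $-\nu(g_{n+1})\geq 0$ if and only if $\nu(f)\leq 0$ for every $f \in \mathbb{C}[x,y]$. Chaining these two equivalences gives
\[
d_m^2\geq \bar{\beta}_{g+1} \;\Longleftrightarrow\; \nu(f)\leq 0 \text{ for all } f\in \mathbb{C}[x,y] \;\Longleftrightarrow\; -\nu(g_{n+1})\geq 0,
\]
which is exactly the statement of the corollary.

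There is no real obstacle here beyond making sure the hypotheses match: Mondal works over $\mathbb{C}$, so restricting to $k=\mathbb{C}$ is necessary, and the choice of affine coordinates $\{x,y\}$ in the chart $Z\neq 0$ used to define the key forms in \cite{mon} is the same one we have fixed in the setup preceding Theorem \ref{gordo1}. Once these compatibility points are noted, the proof is a one-line chain of equivalences. I would therefore write the proof in two or three lines, citing Theorem \ref{gordo1} and \cite[Theorem 1.7]{mon}, with no additional computation needed.
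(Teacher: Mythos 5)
Your chain of equivalences is logically valid and does prove the stated corollary: Theorem \ref{gordo1} gives (a)$\Leftrightarrow$(d) over $\mathbb{C}$, the remark quotes \cite[Theorem 1.7, item 1]{mon} as (d)$\Leftrightarrow$ ($-\nu(g_{n+1})\geq 0$), and there is no circularity since neither result depends on the corollary. However, this is genuinely not the paper's route, and it proves less. The paper never invokes Mondal's characterization theorem: it computes $-\nu(g_{n+1})$ directly, writing $g_{n+1}(x,y)=x^{\deg_u(h_{n+1})}h_{n+1}(1/x,y/x)$ in terms of the last key polynomial $h_{n+1}(v,u)$ of $\nu$ (a general element of the valuation) via \cite[Remark 2.6]{mon}, and then, using \cite[Remark 2.4]{fj}, obtaining the identity $-\nu(g_{n+1})=d_m^2-\betabarra_{g+1}$, from which the equivalence is immediate. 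That identity is the real content: it shows the two numerical criteria are literally the same quantity, so the corollary is an independent compatibility statement between the key-form test and the test $d_m^2\geq\betabarra_{g+1}$, not merely the observation that two characterizations of the same property must agree. Your argument, by contrast, uses Mondal's full theorem as a black box, so it yields only the bare biconditional and no insight into why the criteria coincide; its advantage is brevity, plus the (correctly noted) need to check only that the hypotheses match, namely $k=\mathbb{C}$ and that the key forms are taken with respect to the same fixed coordinates $\{x,y\}$ in the chart $Z\neq 0$, which holds here since $\nu(x)=-\nu(v)<0$ so $-\nu$ is a divisorial semidegree in Mondal's sense. If you want your proof to carry the same weight as the paper's, upgrade it to the identity $-\nu(g_{n+1})=d_m^2-\betabarra_{g+1}$ rather than the equivalence alone.
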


\begin{proof}
We start by noting that, if we consider the system of generators $\{v,u\}$ of the maximal ideal of $R$ (see Section \ref{pdv}) and compute  sequence of key polynomials $\{h_j(v,u)\}_{j=0}^{n+1}$ corresponding to $\nu$ and the mentioned system (see, for instance,  \cite{fj} for the definition), it holds that $$g_{n+1} (x,y)= x^{\deg_u(h_{n+1})} h_{n+1}(1/x,y/x)$$
\cite[Remark 2.6]{mon}. Now, bearing in mind that $h_{n+1}$ is a general element element of the valuation $\nu$ and \cite[Remark 2.4]{fj}, it holds that $-\nu (g_{n+1}) = d_m^2- \betabarra_{g+1}$, which concludes the proof.
\end{proof}

 Recall that, for a curve $C$ on $\gp^2$, $\tilde{C}$ means strict transform on $X$ and, for a divisor $D$ on $X$, $\kappa(D)$ denotes its Iitaka dimension.

\begin{lem}\label{lema3}

Assume that the characteristic of the field $k$ is zero, $D_m^2=0$ and $\kappa(D_m)>0$. Then, there exists a homogeneous polynomial $F\in k[X,Y,Z]$ of degree $d_m$ such that the equation $F(X,Y,Z)=0$ defines a curve on $\gp^2$ having only one place at infinity, the complete linear system $|D_m|$ is base-point-free and the morphism $\varphi_{|D_m|}:X\rightarrow \gp^1$ that it determines eliminates the indeterminacies of the rational map $\gp^2\cdots \rightarrow \gp^1$ induced by the pencil $\langle F, Z^{d_m}\rangle\subseteq H^0(\gp^2,{\mathcal O}_{\gp^2}(d_m))$.

\end{lem}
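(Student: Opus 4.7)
The plan is to realise $|D_m|$ as a base-point-free pencil and then take $F$ to be a defining equation on $\gp^2$ of a general member of that pencil.

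First, since $D_m^2=0$ is the equality case $d_m^2=\bb_{g+1}$ of condition (a) in Theorem \ref{gordo1}, all equivalent conditions of that theorem hold: $D_m$ is nef and $NE(X)$ is regular. Being nef and not big, $\kappa(D_m)\leq 1$, so $\kappa(D_m)=1$ by hypothesis. Next I would establish $h^0(X,\co_X(D_m))\geq 2$ and base-point-freeness of $|D_m|$. For the latter, any fixed component $\Gamma$ of $|D_m|$ would necessarily satisfy $\Gamma\cdot D_m=0$ and, by Remark \ref{remark3}, $\Gamma$ would be a non-negative integer combination of $[\tilde{L}],[E_1],\ldots,[E_m]$; combined with the intersection data $D_m\cdot\tilde{L}=0$, $D_m\cdot E_i=0$ for $i<m$, $D_m\cdot E_m=1$, and with $D_m^2=0$, this forces $\Gamma=0$. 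The lower bound $h^0\geq 2$ follows from Riemann--Roch ($h^2(D_m)=h^0(K_X-D_m)=0$ since $K_X-D_m$ pairs negatively with $E_0^*$, together with the computation of $\chi(\co_X(D_m))$) or, alternatively, from identifying $D_m$ with a fibre class of the Iitaka fibration of $D_m$.

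Having established that $|D_m|$ is a base-point-free pencil, we obtain a morphism $\varphi_{|D_m|}\colon X\to\gp^1$. By Bertini in characteristic zero, together with connectedness of fibres ($h^1(\co_X)=0$ on the rational surface $X$), a general member $\tilde{C}\in|D_m|$ is smooth and irreducible. The intersection numbers $\tilde{C}\cdot E_i=D_m\cdot E_i=\delta_{im}$ for $1\leq i\leq m$ and $\tilde{C}\cdot\tilde{L}=D_m\cdot\tilde{L}=0$ force $\tilde{C}\cap\pi^{-1}(p)=\tilde{C}\cap E_m$ to be a single transverse point on $E_m$ and $\tilde{C}\cap\tilde{L}=\emptyset$. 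Pushing forward, $C:=\pi(\tilde{C})\subset\gp^2$ is an irreducible plane curve of degree $d_m$, analytically irreducible at $p$, and satisfying $C\cap L=\{p\}$: in short, $C$ has one place at infinity. Let $F\in k[X,Y,Z]$ be a homogeneous equation of $C$.

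Finally, the pencil $\langle F,Z^{d_m}\rangle\subseteq H^0(\gp^2,\co_{\gp^2}(d_m))$ induces a rational map $\gp^2\dashrightarrow\gp^1$ with base locus set-theoretically equal to $C\cap L=\{p\}$, and its sequence of infinitely-near base points coincides with $p_1,\ldots,p_m$ because the infinitely-near multiplicities of $F$ at these points match those of $\varphi_m$ (this being precisely the condition $\tilde{C}\sim D_m$); the minimal resolution of these indeterminacies is therefore $\pi$, and the resulting morphism to $\gp^1$ is $\varphi_{|D_m|}$. The principal obstacle I foresee is the verification that $|D_m|$ itself (and not merely $|nD_m|$ for some $n>1$) is base-point-free, for which the regularity of $NE(X)$ from Theorem \ref{gordo1} combined with the duality $D_m\cdot E_m=1$ are essential.
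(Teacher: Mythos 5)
There is a genuine gap, and it sits exactly where you flag your ``principal obstacle'': the claims that $h^0(X,\co_X(D_m))\geq 2$ and that $|D_m|$ has no fixed part. Since $D_m^2=0$, Riemann--Roch gives $h^0(D_m)\geq 2$ only if $K_X\cdot D_m\leq -2$, and by adjunction this is equivalent to saying that the class of $D_m$ is represented by a curve of arithmetic genus $0$ --- which is essentially the nontrivial content of the lemma, not something that follows from $D_m^2=0$ and $\kappa(D_m)>0$ by general surface theory. Your alternative, ``identify $D_m$ with a fibre class of the Iitaka fibration,'' has the same problem: from $\kappa(D_m)=1$ and the Hodge index theorem you only get that the fibre class $f$ of the Stein-factorized fibration is numerically $\beta D_m$ for some integer $\beta\geq 1$; the relation $D_m\cdot E_m=1$ shows $D_m$ is primitive in $\pic(X)$, but it does not exclude $\beta=f\cdot E_m\geq 2$, i.e.\ that only a proper multiple of $D_m$ moves. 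Likewise your assertion that the intersection data force the fixed part of $|D_m|$ to vanish presupposes $h^0(D_m)\geq 2$ (if $h^0(D_m)=1$, the whole effective representative of Remark \ref{remark3} is ``fixed''), so the argument is circular at this point. Finally, the statement that the base points of the pencil $\langle F,Z^{d_m}\rangle$ are exactly $p_1,\dots,p_m$ and that their resolution is $\pi$ with induced morphism $\varphi_{|D_m|}$ is asserted rather than proved.

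The paper closes precisely these holes by importing the theory of curves with one place at infinity: it first removes the fixed part of $|nD_m|$ (its components lie among $\tilde{L},E_1,\dots,E_{m-1}$, so the moving part is orthogonal to $D_m$ and, by the index theorem, linearly equivalent to $\ell D_m$), obtaining an irreducible curve $C$ with $\tilde{C}\sim\beta D_m$; then \cite[Theorem 1]{cpr2} on numerical curvettes (this is where the characteristic-zero Abhyankar--Moh input enters) yields simultaneously that $C$ has only one place at infinity and that $\beta=1$, i.e.\ $\tilde{C}\sim D_m$; the base-point-freeness of $|D_m|$ and the elimination of the indeterminacies of the pencil are then quoted from \cite[Section 4]{cpr}. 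The portions of your argument that do work --- namely that, once $|D_m|$ is known to be a base-point-free pencil, a general member is smooth, meets the exceptional locus in a single transverse point of $E_m$, misses $\tilde{L}$, and so pushes down to a degree-$d_m$ curve with one place at infinity (with irreducibility obtained from $D_m\cdot E_m=1$ via Stein factorization rather than from $h^1(\co_X)=0$ alone) --- come after the hard step, not before it; without an argument replacing the citations to \cite{cpr2} and \cite{cpr}, the proof is incomplete.
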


\begin{proof}
 We assert that the linear system $|\ell D_m|$ has no fixed component for some $\ell\in \mathbb{Z}_{>0}$. Indeed, since $\kappa(D_m)>0$ it holds that $\dim |nD_m|>0$ for some positive integer $n$. If $|nD_m|$ has a fixed part $B\not=0$ then, by Remark \ref{remark3} and the equality $D_m^2=0$, the integral components of $B$ are among $\{\tilde{L}, E_1, E_2, \ldots,E_{m-1}\}$ and, therefore, $D_m\cdot B=0$. Then $D_m\cdot (nD_m-B)=0$ and, hence, $nD_m-B$ is linearly equivalent to $\ell D_m$ for some $\ell\in \mathbb{Z}_{>0}$. The assertion follows because $|nD_m-B|$ has no fixed component.

The above reasoning shows that $|\alpha \ell D_m|$ has no fixed part for all $\alpha\in \mathbb{Z}_{>0}$ and this implies that there exists a curve $C$ on $\gp^2$ such that its strict transform $\tilde{C}$ is linearly equivalent to $\beta D_m$ for some $\beta \in \mathbb{Z}_{>0}$. According with Definition 8 in \cite{cpr2}, $\tilde{C}$ is a {\it numerical $m$-curvette}  and, since $\tilde{C}^2=0$, Theorem 1 in the same paper proves that $C$ has only one place at infinity and $\tilde{C}$ is linearly equivalent to $D_m$.

Finally, it holds that the degree of $C$ is $d_m$ and  $d_m^2=\sum_{i=1}^m \mathrm{mult}_{p_i}(C)^2$, where $\mathrm{mult}_{p_i}(C)$ denotes the multiplicity at $p_i$ of the strict transform of $C$ at that point, because $D_m^2=0$. This concludes our result by \cite[Section 4]{cpr}.

\end{proof}

Theorem 1 characterizes the case when $\nu(f) \leq 0$ for all polynomials in $k[x,y]\setminus k$, $k$ being an algebraically closed field of arbitrary characteristic. Next, we provide two characterizations for the case when the inequality is strict.

\begin{teo}\label{gordo2}
Keep the same assumptions and notations as in Theorem \ref{gordo1}. Then, the following conditions are equivalent:
\begin{itemize}
\item[(a)] Either $d_m^2>\betabarra_{g+1}$, or $d_m^2=\betabarra_{g+1}$ and $\kappa(D_m)=0$.
\item[(b)] $D_m\cdot \tilde{C}>0$ for any integral curve $C$ on $\mathbb{P}^2$ different from $L$, $\tilde{C}$ being the strict transform on $X$ of the curve $C$.
\item[(c)] $\nu(f)< 0$ for all $f\in k[x,y]\setminus k$.

\end{itemize}
Moreover, when the characteristic of $k$ is zero, Condition {\rm (a)} can be replaced by
\begin{itemize}
\item[(a')] Either $d_m^2>\betabarra_{g+1}$, or $d_m^2=\betabarra_{g+1}$ and $\dim|D_m|=0$.

\end{itemize}

\end{teo}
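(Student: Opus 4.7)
The plan is to prove (b)$\Leftrightarrow$(c) directly from the pairing formula of Theorem~\ref{gordo1}, then establish (a)$\Leftrightarrow$(b) using Hodge index together with the cone structure of Theorem~\ref{gordo1}, and finally obtain (a)$\Leftrightarrow$(a') in characteristic zero via Lemma~\ref{lema3}. The equivalence (b)$\Leftrightarrow$(c) follows immediately from the identity $-\nu(f)=D_m\cdot\tilde{\Phi}(f)$ of Theorem~\ref{gordo1} for $f\in\Gamma$, together with $D_m\cdot\tilde{W}=d_m>0$ for the factor $x$ (corresponding to the line $W\colon X=0$): factoring $f\in k[x,y]\setminus k$ into irreducibles yields a sum of intersections $D_m\cdot\tilde{C}$ over integral curves $C\neq L$, so strict positivity of all these is equivalent to $\nu(f)<0$ for every non-constant $f$.

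For (b)$\Rightarrow$(a), first observe that (b) forces $D_m$ nef and $d_m\geq 2$: any line $\ell$ through $p$ different from $L$ has strict transform of class $E_0^*-E_1^*$, so $D_m\cdot\tilde{\ell}=d_m-1$, which must be positive by (b). Theorem~\ref{gordo1} then gives $d_m^2\geq\bar{\beta}_{g+1}$. In the boundary case $d_m^2=\bar{\beta}_{g+1}$ (so $D_m^2=0$), suppose $\dim|nD_m|\geq 1$ for some $n$. Every irreducible component $C'$ of any divisor in $|nD_m|$ satisfies $D_m\cdot C'=0$ (since $D_m$ is nef and $D_m\cdot nD_m=0$), so by (b) such $C'$ lies in $\{\tilde{L},E_1,\ldots,E_{m-1}\}$. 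By linear independence of these classes in $\mathrm{Pic}_{\mathbb{Q}}(X)$, an effective divisor with components in this set is determined by its class, so $|nD_m|$ has at most one element, contradicting $\dim|nD_m|\geq 1$. Hence $\kappa(D_m)=0$.

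For (a)$\Rightarrow$(b), suppose $\tilde{C}$ is the strict transform of an integral $C\neq L$ with $D_m\cdot\tilde{C}=0$. Since $NE(X)$ is generated by $\{\tilde{L},E_1,\ldots,E_m\}$ (Theorem~\ref{gordo1}) and orthogonality to $D_m$ kills the $E_m$-coordinate, we have $[\tilde{C}]=a\tilde{L}+\sum_{i<m}b_iE_i$ with $a,b_i\geq 0$. Let $M$ be the intersection matrix of $\{\tilde{L},E_1,\ldots,E_{m-1}\}$, which is a basis of the orthogonal complement of $[D_m]$ in $\mathrm{Pic}_{\mathbb{Q}}(X)$, and set $v=(a,b_1,\ldots,b_{m-1})^T$. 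The entries of $Mv$ are the non-negative intersections $\tilde{C}\cdot\tilde{L}$ and $\tilde{C}\cdot E_i$, so $\tilde{C}^2=v^TMv=v\cdot(Mv)\geq 0$. Since any ample class pairs positively with $D_m$ (nef and effective), the positive direction of the intersection form lies outside this orthogonal complement, making $M$ negative semidefinite. When $d_m^2>\bar{\beta}_{g+1}$ (so $D_m\notin D_m^{\perp}$), $M$ is in fact negative definite, so $v^TMv<0$ for $v\neq 0$, contradicting $\geq 0$. When $d_m^2=\bar{\beta}_{g+1}$, the kernel of $M$ is one-dimensional, spanned by the non-negative integer basis-coordinates $(d_m,\beta_1,\ldots,\beta_{m-1})$ of $[D_m]$; thus $\tilde{C}^2=0$ forces $[\tilde{C}]=\lambda[D_m]$ for some $\lambda=p/q\in\mathbb{Q}_{>0}$ in lowest terms, and the divisors $q\tilde{C}$ and $p(d_m\tilde{L}+\sum_{i<m}\beta_iE_i)$ are two distinct effective members of $|pD_m|$, giving $\kappa(D_m)\geq 1$ and contradicting (a).

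In characteristic zero, (a)$\Rightarrow$(a') is trivial (take $n=1$); conversely, if $\dim|D_m|=0$ but $\kappa(D_m)\geq 1$ (which forces $D_m^2=0$), Lemma~\ref{lema3} would yield a base-point-free morphism $\varphi_{|D_m|}\colon X\to\mathbb{P}^1$, so $\dim|D_m|\geq 1$, a contradiction. The main obstacle is the boundary case $d_m^2=\bar{\beta}_{g+1}$ in (a)$\Leftrightarrow$(b), where the intersection form on the orthogonal complement of $[D_m]$ is only negative semidefinite rather than definite, and a rational proportionality of divisor classes must be carefully promoted to two distinct effective representatives in a multiple linear system to force the desired contradiction with $\kappa(D_m)=0$.
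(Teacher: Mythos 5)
Your proposal is correct and follows essentially the same route as the paper: it rests on Theorem \ref{gordo1} (nefness of $D_m$ and regularity of $NE(X)$), the dictionary $-\nu(f)=D_m\cdot\tilde{\Phi}(f)$, a Hodge-index argument on $[D_m]^{\perp}$ forcing $[\tilde{C}]$ proportional to $[D_m]$ and hence $\kappa(D_m)>0$, and Lemma \ref{lema3} for (a$'$), merely reorganized as (b)$\Leftrightarrow$(c) and (a)$\Leftrightarrow$(b) instead of the paper's cycle (a)$\Rightarrow$(b)$\Rightarrow$(c)$\Rightarrow$(a). The only blemish is the inessential claim $D_m\cdot\tilde{\ell}=d_m-1$ for a general line $\ell$ through $p$ (it equals $d_m-\mathrm{mult}_{p}(\varphi_m)$), but the fact you actually need, nefness of $D_m$, follows from (b) together with $D_m\cdot\tilde{L}=0$ and $D_m\cdot E_i\geq 0$, so the argument stands.
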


\begin{proof}
Assume that item (a) holds and let us prove item (b). Reasoning by contradiction, let $C$ be an integral curve on $\mathbb{P}^2$ different from $L$ and such that $D_m\cdot \tilde{C}\leq0$. By Theorem \ref{gordo1}, $D_m$ is a nef divisor and so $D_m \cdot \tilde{C} = 0$. This implies that the class $[\tilde{C}]$ belongs to the face of the cone of curves $NE(X)$ given by
$[D_m]^{\perp}\cap NE(X)$, where $[D_m]^{\perp}$ means the orthogonal space to the vector $[D_m]$ (with respect to the intersection product). With the previous notations, let us notice that this face, $F$, is spanned by $[\tilde{L}], [E_1], [E_2], \ldots, [E_{m-1}]$. Since these last classes are the unique on $F$ given by curves with negative self-intersection (by the proof of Theorem \ref{gordo1}), one has that $\tilde{C}^2=0$. This implies that $D_m^2=0$ and $\tilde{C}$ is linearly equivalent to a multiple of $D_m$. Therefore $D_m^2=0$ and $\kappa(D_m)\not=0$ (taking into account Remark \ref{remark3}), which is a contradiction because our Statement (a) is equivalent to say that either $D_m^2>0$, or $D_m^2=0$ and $\kappa(D_m)=0$.

A similar reasoning as that of the proof of (c) implies (d) in Theorem \ref{gordo1} allows us to show that item (b) in this theorem implies item (c).

Next we prove, also by contradiction, that item (c) implies item (a). We will use the notation of the proof of Theorem \ref{gordo1}. By that theorem,  we have that $d_m^2\geq \betabarra_{g+1}$. If $D_m^2=0$ (i.e., $d_m^2 =\betabarra_{g+1}$) and $\kappa(D_m)>0$, then there exists a positive integer $n$ such that $nD_m-\tilde{R}$ gives an effective class,  $R$ being an irreducible curve of $\mathbb{P}^2$ different from $L$ and $W$. Therefore $D_m\cdot \tilde{R}\leq 0$ and this implies that $\nu(f)\geq 0$, where $f\in \Gamma$ and $\tilde{R}=\tilde{\Phi}(f)$, which gives the desired contradiction.

We conclude by proving that, assuming that the characteristic of $k$ is zero, items (a) and (a') are equivalent. Indeed, the fact that (a) implies (a') is clear and the converse implication holds by Lemma \ref{lema3}.

\end{proof}

\begin{rem}
{\rm
An equivalent condition to the fact that $\nu(f) <0$ for all polynomials $f \in \mathbb{C}[x,y] \setminus \mathbb{C}$, given in terms of the above mentioned sequence of key forms $\{g_j\}_{j=0}^{n+1}$ of $\nu$, has been given in \cite[Theorem 1.4, item 2]{mon}. The condition is that either $-\nu(g_{n+1}) > 0$ or $\nu(g_{n+1}) = 0$ and $g_{n+1} \not \in \mathbb{C}[x,y]$. The following result proves that, when $k = \mathbb{C}$, the above condition and item (a) in Theorem \ref{gordo2} are equivalent.
}
\end{rem}

\begin{cor}
\label{mondal2}
Let $\nu$ be a divisorial valuation as in Theorem \ref{gordo2}. Assume that $k$ is the field of complex numbers. Consider the above defined divisor $D_m$ on $X$ and the sequence $\{g_j\}_{j=0}^{n+1}$ of key forms of $\nu$. Then the following conditions are equivalent:

\begin{itemize}
\item[(a)]  $\nu(g_{n+1}) = 0$ and $g_{n+1} \not \in \mathbb{C}[x,y]$.
\item[(b)] $D_m^2=0$ and $\kappa(D_m)=0$.
\end{itemize}
\end{cor}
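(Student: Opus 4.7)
The plan is to deduce Corollary \ref{mondal2} by pincering together three results already in place: Corollary \ref{mondal1}, the equivalence (a)$\Leftrightarrow$(c) of Theorem \ref{gordo2}, and Mondal's characterization \cite[Theorem 1.4, item 2]{mon}. The argument should proceed in exactly the same spirit as the proof of Corollary \ref{mondal1}: translate each half of conditions (a) and (b) into an intrinsic statement about $\nu$ and verify that the two translations coincide.

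First I would dispose of the ``first halves'' of (a) and (b). Corollary \ref{mondal1} already gives $-\nu(g_{n+1}) = d_m^2 - \betabarra_{g+1}$, so $\nu(g_{n+1}) = 0$ is equivalent to $d_m^2 = \betabarra_{g+1}$, which is the same as $D_m^2 = 0$. Under this common hypothesis, the corollary reduces to showing that $g_{n+1} \not\in \mathbb{C}[x,y]$ if and only if $\kappa(D_m) = 0$. I would bracket this residual equivalence through the intermediate property $(P)$: ``$\nu(f) < 0$ for every $f \in \mathbb{C}[x,y] \setminus \mathbb{C}$''. On the geometric side, the equivalence (a)$\Leftrightarrow$(c) of Theorem \ref{gordo2}, specialized to the hypothesis $d_m^2 = \betabarra_{g+1}$, collapses item (a) of that theorem to the single condition $\kappa(D_m) = 0$, giving $(P) \Leftrightarrow \kappa(D_m) = 0$. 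On the key-form side, Mondal's \cite[Theorem 1.4, item 2]{mon} characterizes $(P)$ as ``$-\nu(g_{n+1}) > 0$, or $\nu(g_{n+1}) = 0$ and $g_{n+1} \not\in \mathbb{C}[x,y]$''; specializing to $\nu(g_{n+1}) = 0$ this becomes $g_{n+1} \not\in \mathbb{C}[x,y]$. Chaining, $\kappa(D_m) = 0 \Leftrightarrow (P) \Leftrightarrow g_{n+1} \not\in \mathbb{C}[x,y]$, which is exactly what is needed.

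Because the argument is a pincer built from results already established (and not a new computation), there is no genuinely hard step. The only point that deserves care is the reconciliation of conventions: verifying that Mondal's sequence of key forms $\{g_j\}_{j=0}^{n+1}$ and the sign convention for $\nu$ in \cite{mon} line up with those employed here. That bookkeeping was essentially carried out in the proof of Corollary \ref{mondal1} via the identity $-\nu(g_{n+1}) = d_m^2 - \betabarra_{g+1}$, and I would simply reuse it here rather than redo it.
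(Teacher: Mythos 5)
Your argument is logically sound and does prove the stated equivalence, but it is a genuinely different route from the paper's. You build a pincer: the identity $-\nu(g_{n+1})=d_m^2-\bar{\beta}_{g+1}=D_m^2$ from the proof of Corollary \ref{mondal1} handles the first halves, and then you pass through the property ``$\nu(f)<0$ for all $f\in\mathbb{C}[x,y]\setminus\mathbb{C}$'', characterized on one side by Theorem \ref{gordo2} (a)$\Leftrightarrow$(c) and on the other by Mondal's criterion, each specialized to the case $\nu(g_{n+1})=0$. The paper instead proves the residual equivalence \emph{directly}, without ever invoking Mondal's characterization of negative valuations: it writes $g_{n+1}(x,y)=x^{\deg_u(h_{n+1})}h_{n+1}(1/x,y/x)$ for the last key polynomial $h_{n+1}$, observes via B\'ezout and the equality $d_m=\deg_u(h_{n+1})$ that $g_{n+1}\notin\mathbb{C}[x,y]$ exactly when $\deg(h_{n+1})>d_m$, and then uses Lemma \ref{lema3} together with \cite[Proposition 3.8]{mon2} for one implication and, for the converse, the fact that if $\deg(h_{n+1})=d_m$ then the strict transform of the curve $\{h_{n+1}=0\}$ is linearly equivalent to $D_m$, forcing $\kappa(D_m)>0$. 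The trade-off is real: your proof is shorter and purely formal, but it makes the corollary depend on Mondal's Theorem 1.4(2) as a black box, which rather defeats the stated purpose of the surrounding remark (an independent verification that the two characterizations agree); the paper's geometric proof is independent of that theorem, so combined with Theorem \ref{gordo2} it effectively re-derives Mondal's criterion over $\mathbb{C}$, and it explains concretely why $g_{n+1}\in\mathbb{C}[x,y]$ corresponds to $\kappa(D_m)>0$. If you keep your route, you should at least state explicitly that the corollary then carries no content beyond Theorem \ref{gordo2} plus the cited result of Mondal.
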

\begin{proof}
By \cite[Remark 2.6]{mon}, we have that $$g_{n+1} (x,y)= x^{\deg_u(h_{n+1})} h_{n+1}(1/x,y/x),$$
where $h_{n+1}(v,u)$ is the last key polynomial attached with the valuation $\nu$ (it is a general element element of $\nu$). It is clear from this equality that $g_{n+1} (x,y)\not\in \mathbb{C}[x,y]$ if and only if $\deg(h_{n+1})\not=\deg_u(h_{n+1})$. This happens if and only if $\deg(h_{n+1})>d_m$. Indeed, it suffices to apply Bézout's Theorem to the line at infinity $L$ and the projective closure $H$ of the affine curve defined by $h_{n+1}(v,u)=0$ and take into account  that, by \cite[Remark 2.4]{fj}, $d_m=\deg_u(h_{n+1})$.

Now,  Lemma \ref{lema3} and \cite[Proposition 3.8]{mon2} show that the inequality $\kappa(D_m)>0$ implies that $g_{n+1} \in \mathbb{C}[x,y]$. Therefore (b) can be deduced from (a) since $\nu(g_{n+1}) = 0$ is an equivalent fact to $D_m^2=0$.

The converse implication follows from the fact that if $g_{n+1}\in \mathbb{C}[x,y]$, then $\deg(h_{n+1})=d_m$. Thus, the strict transform on $X$ of the above curve $H$ is linearly equivalent to $D_m$ and this implies that $\kappa(D_m)>0$.

\end{proof}

\section{Characteristic cone and Cox ring for non-positive valuations}
\label{latres}
In this section, we consider  rational surfaces $X$ given by plane divisorial valuations of the quotient field of $R$ with good behaviour al infinity, that is satisfying the equivalent statements given in Theorem \ref{gordo1}. We will characterize when the characteristic cone $\tilde{P}(X)$ of these surfaces is closed and, as a consequence, its Cox ring, Cox$(X)$, is a finitely generated $k$-algebra.

\begin{teo}\label{gordo3}

Let $X$ be a surface defined by a plane divisorial valuation  $\nu$ as in Theorem \ref{gordo1}. Assume that the equivalent statements given in that theorem  happen. Consider the sequence of divisors on $X$ given in (\ref{Z}).  Then, the characteristic cone $\tilde{P}(X)$ is closed if and only if, for all $i\in \{2,\ldots,m\}$, either $D_i^2>0$, or $D_i^2=0$ and $\kappa(D_i)> 0$. If the characteristic of the field $k$ is zero, the condition $\kappa(D_i)> 0$ can be replaced by $\dim |D_i|>0$.
\end{teo}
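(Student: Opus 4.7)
The plan is to identify closedness of $\tilde{\mathcal{P}}(X)$ with semi-ampleness of the extremal generators of $\mathcal{P}(X)$. Under the hypothesis of Theorem \ref{gordo1}, $NE(X)$ is the regular cone spanned by the basis $\{[\tilde{L}],[E_1],\ldots,[E_m]\}$, so its dual $\mathcal{P}(X)$ is the regular cone spanned by the dual basis $\{[D_0],[D_1],\ldots,[D_m]\}$. Since $\tilde{\mathcal{P}}(X)\subseteq\mathcal{P}(X)$ and both cones share their topological interior, $\overline{\tilde{\mathcal{P}}(X)}=\mathcal{P}(X)$; hence $\tilde{\mathcal{P}}(X)$ is closed if and only if each extremal ray of $\mathcal{P}(X)$ is spanned by a semi-ample class, i.e., if and only if every $D_i$ is semi-ample.

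First I would dispose of the indices $i=0,1$: the class $[D_0]=[E_0]$ is realized by the base-point-free system of lines in $\mathbb{P}^2$ not through $p$; and choosing $\varphi_1$ to be a generic line through $p$ makes $D_1=E_0^*-E_1^*$ the class of the pencil of strict transforms of lines through $p$, whose members are pairwise disjoint already on $X_1$. Thus the statement of the theorem reduces to the indices $i\in\{2,\ldots,m\}$.

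For the \emph{only if} direction I would argue by contradiction: if some $D_i$ satisfies $D_i^2=0$ and $\kappa(D_i)=0$, then by Remark \ref{remark3} the divisor $D_i$ is effective, so $|nD_i|\neq \emptyset$ for every $n\geq 1$; but $\kappa(D_i)=0$ forces $|nD_i|=\{nD_i\}$, whose base locus equals the non-empty support of $nD_i$, contradicting semi-ampleness. For the \emph{if} direction when $D_i^2=0$ and $\kappa(D_i)>0$, I would mimic Lemma \ref{lema3}: any fixed part $B$ of $|nD_i|$ has irreducible components lying in the face of $NE(X)$ orthogonal to $[D_i]$, generated by $[\tilde L]$ and the $[E_j]$ with $j\neq i$; hence $D_i\cdot B=0$ and $nD_i-B$ is numerically equivalent to $\ell D_i$ for some $\ell\geq 1$. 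Consequently some $|n_0D_i|$ has positive dimension and no fixed component, and since $(n_0D_i)^2=0$ two generic members are disjoint, so $|n_0D_i|$ is base-point free.

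The remaining case $D_i^2>0$ is the main technical step. Here $D_i$ is big and nef, and by the Hodge index theorem the classes $\{[\tilde L]\}\cup\{[E_j]:j\neq i\}\subseteq [D_i]^{\perp}$ have negative-definite intersection matrix. By Artin's contractibility theorem there is a birational morphism $\pi_i\colon X\to Y_i$ onto a normal surface contracting exactly these curves; projectivity of $Y_i$ follows from ampleness of the push-forward $(\pi_i)_*D_i$ on the Picard-rank-one surface $Y_i$, so a positive multiple of $D_i$ is the pullback of a very ample divisor and is therefore base-point free. I expect this case to be the main obstacle: the delicate point is certifying projectivity of $Y_i$, rather than merely its existence as an algebraic space, in arbitrary characteristic. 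Finally, the characteristic-zero refinement follows from Lemma \ref{lema3}: under $D_i^2=0$, the implication $\kappa(D_i)>0\Rightarrow \dim|D_i|>0$ is supplied by that lemma, which shows $|D_i|$ itself is base-point free of positive dimension, while the reverse implication is tautological.
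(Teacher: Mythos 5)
Your overall reduction---closedness of $\tilde{P}(X)$ is equivalent to semi-ampleness of every $D_i$---as well as your treatment of $i=0,1$, your converse direction (effectivity from Remark \ref{remark3} plus $\kappa(D_i)=0$ forces every $|nD_i|$ to be the single divisor $nD_i$, hence never base-point free), and your argument in the case $D_i^2=0$, $\kappa(D_i)>0$ are all sound and essentially parallel to the paper, which proves base-point-freeness of $|nD_i|$ for all $i$ by induction and settles the $D_i^2=0$ case by the same orthogonality/moving-part computation (finishing with Zariski's theorem on high multiples where you use the two-disjoint-generic-members argument). The gap is in the case $D_i^2>0$, which is where the real content of the theorem lies, and it is exactly the point you flag but do not resolve. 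Over an arbitrary algebraically closed field, Artin's contractibility theorem produces the contraction of a negative-definite configuration only as an algebraic space; to land in the category of schemes one needs the resulting singularities to be rational (Artin's 1962 criterion) or some substitute, and the configurations $\tilde{L}+\sum_{j\neq i}E_j$ are trees of rational curves, which do not automatically contract to rational singularities. Moreover, even granting a proper normal surface $Y_i$, the push-forward $(\pi_i)_*D_i$ is a priori only a Weil divisor: to speak of its ampleness and deduce projectivity via Nakai--Moishezon you need a multiple of $D_i$ to descend to a Cartier divisor on $Y_i$, i.e.\ you need $\mathcal{O}_X(nD_i)$ to be trivial on a neighborhood of the contracted locus, and that descent is essentially equivalent to the semi-ampleness you are trying to establish; numerical triviality of $D_i$ on the contracted curves does not provide it. So the step ``projectivity of $Y_i$ follows from ampleness of $(\pi_i)_*D_i$'' is circular as written.

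This is precisely where the paper works hardest, using an induction on $i$ that your one-shot contraction bypasses. Assuming $|n_jD_j|$ base-point free for $j<m$, it distinguishes whether $p_m$ is satellite or free. In the satellite case it contracts each connected component $F'$ of $F=\tilde{L}+\sum_{j<m}E_j$ in the category of schemes via Schr\"oer's criterion, by exhibiting an explicit curve $\Delta=E_b+\sum_{j\in\mathcal{J}(F')}\tilde{C}_j$, built from curves $\tilde{C}_j$ linearly equivalent to $n_jD_j$ supplied by the induction hypothesis together with Theorem \ref{gordo2}, Corollary \ref{grrr} and Lemma \ref{lema2}, which is disjoint from $F'$ and positive on every curve not supported on $F'$; it then concludes $[D_m]\in\tilde{P}(X)$ through Kleiman's order isomorphism between Stein factors of $X\rightarrow {\rm Spec}(k)$ and cells of the characteristic cone, rather than by proving ampleness of a pushed-forward divisor. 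In the free case it avoids contractions altogether, combining Zariski's results on high multiples of an effective divisor with a local analysis of $|nD_m|$ at the free point $p_m$ and the base-point-freeness of $|nD_{m-1}|$. Until you can certify, in arbitrary characteristic, that your contraction is a projective scheme and that a multiple of $D_i$ descends to a Cartier divisor on it, the main case of your proof is incomplete.
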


\begin{proof}

For a start, let us see that any of the facts $D_i^2>0$, or $D_i^2=0$ and $\kappa(D_i)\neq 0$ for all index $i\geq 2$ imply that $\tilde{P}(X)$ is closed. To do it, we will prove that, for all $i\in \{1,2,\ldots,m\}$, the complete linear system $|nD_i|$ is base-point-free for $n$ large enough. We use complete induction on $i$.
For $i=1$ our assertion is clear,  so suppose that it is true for $1, 2, \ldots,i-1$ and let us show it for $i$.

Along this proof we will use the concept of contractible curve $C$ on $X$, which means that $C \subset X$ can be contracted to another algebraic $k$-scheme. We will also use the concept of negative definite curve  on $X$, that is,  a curve whose intersection matrix for its irreducible components is negative definite.

Without loss of generality, we can assume that $i=m$.

{\it Firstly, suppose $D_m^2 > 0$}. Consider the reduced curve
$$F:=\tilde{L}+\sum_{i=1}^{m-1} E_i.$$
Assume that $\tilde{L}$ meets the divisor $E_r$. If $m= r$ then, applying Artin's contractibility criterion, $F$ is contractible because it is a Hirzebruch-Jung string (see \cite[Sections III.2 and III.3]{barth}, for instance). Let $f:X\rightarrow Y$ be such a contraction. Using Stein factorization \cite[III, Corollary 11.5]{hart} we can assume that $f_*\co_X=\co_Y$.

Now, consider, on the one hand, the set of Stein factors of the structural morphism $h:X\rightarrow {\rm Spec}(k)$ of $X$ as $k$-scheme ordered by the domination relation and, on the other hand, the set of topological cells of the characteristic cone $\tilde{P}(X)$ ordered by inclusion. The first set is that of factorizations $h=q\circ p$ where $p:X\rightarrow Y$ is surjective, $q:Y \rightarrow {\rm Spec}(k)$ is projective and $p_*\co_X=\co_Y$.  By a result of Kleiman \cite[page 340]{Kleiman}, there exists an order isomorphism between the above mentioned sets.

Let $g:Y\rightarrow {\rm Spec}(k)$ be the structural morphism of $Y$ as $k$-scheme; clearly $h=g\circ f$ is a Stein factor of $h$. We claim that its image by the mentioned isomorphism is the cell of $\tilde{P}(X)$ defined by the ray $\mathbb{Q}_+[D_m]$. Indeed, the multiples of $D_m$ are the unique divisors $D$ on $X$ such that $D\cdot M=0$ for every irreducible component $M$ of $F$; so any contraction of $F$ must be defined by global sections of a sheaf ${\mathcal O}_{X}(\alpha D_m)$ for some positive integer $\alpha$. Therefore $[D_m]\in \tilde{P}(X)$.

Hence we suppose from now on that $m>r$ and divide this part of the proof in two cases depending on whether $p_m$ is, or not, a satellite point.

{\it Assume that $p_m$ is a satellite point}. In this case $F$ has two connected components and there are two exceptional divisors $E_a$ and $E_b$ meeting $E_m$ which belong to different connected components of $F$. By induction hypothesis we know that, for each $i\in \{0,1, \ldots,m-1\}$, there exists a positive integer $n_i$ such that $|n_iD_i|$ is base-point-free and, therefore, there exists a curve $C_i\not= L$ on $\mathbb{P}^2$ such that its strict transform $\tilde{C}_i$ is linearly equivalent to $n_iD_i$ (where $n_0:=1$ and $C_0=E_0$). We can assume, moreover, that $n_a>-E_a^2$ and $n_b>-E_b^2$.

Let $F'$ be a connected component of $F$ and we are going to prove that $F'$ is contractible. To do it, we will use \cite[Theorem 3.3]{Schroer}, which asserts that it suffices  to check that $F'$ is definite negative and there exists a curve $\Delta \subset X$ disjoint to $F'$ with $\Delta \cdot R >0$ for every curve $ R \subset X$ not supported by $F'$.

Without loss of generality, suppose that $E_b$ is not supported by $F'$ and consider the effective divisor
$$\Delta:=E_b+\sum_{i\in {\mathcal J}(F')} \tilde{C}_i,$$
where ${\mathcal J}(F')$ is the set of indices $1 \leq i \leq m-1$ corresponding with divisors $E_i$ which are not in $F'$, except when $\tilde{L}$ is not a component of $F'$; in this last case, to get ${\mathcal J}(F')$, one must add the index $i=0$ to the above ones. Clearly, $b\in {\mathcal J}(F')$ and $\Delta$ is disjoint to $F'$.

Let us show that $\Delta\cdot R>0$ for every curve $R\subseteq X$ not supported by $F'$. To see this, it suffices to consider three types of curves $R$.

For a start, let $R$ be the strict transform of a curve on $\mathbb{P}^2$ such that $\tilde{L}$ is not supported by it. It is clear that $E_b\cdot R\geq 0$; moreover, $D_i\cdot R\geq 0$ for all $1 \leq i \leq m-1$ by Corollary \ref{grrr} and Theorem \ref{gordo1}. If there exists some $i\in {\mathcal J}(F')$ such that $D_i^2>0$ then, by Theorem \ref{gordo2}, it holds that $D_i\cdot R>0$ and we conclude the desired inequality: $\Delta\cdot R>0$. Otherwise, Lemma \ref{lema2} implies that
${\mathcal J}(F')=\{b\}$ and $D_b^2=0$. In addition, this last equality and the nefness of the divisor $D_b$ show that $D_{b}\cdot D_i>0$ for all $i\in \{1, 2, \ldots,m\}\setminus \{b\}$; notice that this happens because $D_i^2\geq 0$ and the class $[D_i]$ is not a multiple of $[D_{b}]$. Also, Remark \ref{remark3} proves that
$$D_{b}=(D_{b}\cdot D_0)\tilde{L}+\sum (D_{b}\cdot D_i)E_i,$$ where the sum is taken over the set  $i\in \{1, 2, \ldots,m\}\setminus \{b\}$. Then either $D_b\cdot R>0$,  or $\tilde{L}\cdot R=0$ and $E_i\cdot R=0$ for all $i\in \{1, 2, \ldots,m\}\setminus \{b\}$ (what implies that $E_{b}\cdot R>0$). Hence we also conclude that $\Delta\cdot R>0$.


Now we consider our second type of curves $R$, which are the exceptional ones. First, let us show that $\Delta\cdot E_i>0$ for  $i\in {\mathcal J}(F')$. In fact, if  $\delta_{ij}$ denotes the Kronecker's delta, it holds $D_j\cdot E_i=\delta_{ij}$ for all $j\in {\mathcal J}(F')$, $E_b\cdot E_i\geq 0$ if $i\not=b$ and $(n_bD_b+E_b)\cdot E_b>0$. Finally, $\Delta\cdot E_m>0$ because $E_b\cdot E_m=1$ and $D_i\cdot E_m=0$ for all $i\in {\mathcal J}(F')$.

To finish we have to consider curves $R$ such that $\tilde{L}$ is supported by them and not supported by $F'$. Then it suffices to note that $\Delta\cdot \tilde{L}\geq E_0\cdot \tilde{L}>0$, which concludes the reasoning for our third type of curves and this part of the proof.

To finish of checking the  hypotheses of Theorem 3.3 in \cite{Schroer}, it remains to show that the curve $F'$ is negative definite. This is true because the class in
${\rm Pic}_{\mathbb{Q}}(X)$ of any effective divisor whose components are integral components of $F'$ belongs to the face of the cone of curves $NE(X)$ given by $NE(X)\cap [D_m]^{\perp}$, which is contained in the complementary of the set $Q(X)$ defined before Lemma \ref{lema1} (since $D_m^2>0$).

As a consequence of the above paragraphs, we conclude that $F$ is contractible because every connected component is \cite[Lemma 3.1]{Schroer}. Finally, reasoning as above, a contraction of $F$ must be defined by sections of $\co_X(\alpha D_m)$ for some positive integer $\alpha$ and, therefore, $[D_m]\in \tilde{P}(X)$.


{\it Suppose now that $p_m$ is a free point}. We will prove, reasoning by contradiction, that $|nD_m|$ is base point free for some positive integer $n$, which will conclude the proof in the case $D_m^2 > 0$. By \cite[Theorem 6.1]{Zariski}, it is enough to assume that $|nD_m|$ has a fixed part for all $n$ and lead to a contradiction. Assuming this and $n$ large enough, it holds that only $\tilde{L}, E_1, E_2, \ldots, E_{m-1}$ can be fixed components of $|nD_m|$. In fact, it suffices to apply \cite[Theorem 9.1]{Zariski} and take into account Remark \ref{remark3} and the equalities $D_m\cdot E_i=0$ for $i\in \{1, 2, \ldots,m-1\}$, $D_m\cdot E_m=1$ and $D_m\cdot \tilde{L}=0$.

Now, pick $n \gg 0$ such that the last condition holds and the linear system $|nD_{m-1}|$ is base-point-free. We can do this by using the induction hypotheses. Since $p_m$ is a free point, the local equations at it of the curves in ${\pi_m}_*|nD_m|$ coincide with those of the curves in ${\pi_m}_*|nD_{m-1}|$. Also ${\pi_m}_*|nD_{m-1}|$ is base-point-free because of the natural isomorphism with $|nD_{m-1}|$. The above facts and the shape of the local equations at $p_m$ of the curves in ${\pi_m}_*|nD_{m-1}|$ allows us  to deduce that $E_{m-1}$ is not a component of some curve in the complete linear system  $|nD_m|$. Therefore the integral fixed components of $|nD_m|$ must belong to the set $S:=\{\tilde{L}, E_1, E_2, \dots,E_{m-2}\}$. If $R$ is one of them and $R'$ is some other curve in $S\cup \{E_{m-1}\}$ that meets $R$, then $(nD_m-R)\cdot R'=-R\cdot R'=-1$. This  and the fact that the above introduced curve $F$ is connected show that $F$ is in the fixed part of $|nD_m|$ and, in particular, $E_{m-1}$ is a fixed component, which provides the desired contradiction.




To complete the proof of this implication, we have to {\it assume that  $D_m^2=0$ and $\kappa(D_m)> 0$}. These conditions imply that, for some positive integer $n$, $nD_m$ is effective and linearly equivalent to a divisor $H+T$, where $T$ is the fixed part of $|nD_m|$ and $H$ an effective divisor such that the complete linear system $|H|$ has no fixed part. Since $0=n^2D_m^2=nD_m\cdot H+nD_m\cdot T$ and $D_m$ is nef, it holds that $D_m\cdot H=0$ and, hence, the class $[H]$ is a multiple of $[D_m]$. Therefore, again by \cite[Theorem 6.1]{Zariski}, $|nD_m|$ has no base point for $n$ large enough.

The {\it converse of our first statement} follows straightforward from the fact that $\tilde{P}(X)$ is not closed if $\kappa(D_i)=0$ for some $i\in \{1,2,\ldots,m\}$. 

Finally, it is also straightforward that our last statement holds by Lemma \ref{lema3} which finishes the proof.

\end{proof}

As a consequence of the above theorem, \cite[Corollary 3]{Galmon} and \cite{Hu}, we get the following result that gives conditions for the ring Cox$(X)$ of our rational surfaces $X$ is a finitely generated $k$-algebra.

\begin{cor}
\label{COX}
 Keep the assumptions and notations of Theorem \ref{gordo3}. Then the Cox ring, $\mathrm{Cox}(X)$, is a finitely generated $k$-algebra if and only if, for all $i\in \{2,\ldots,m\}$, either $D_i^2>0$, or $D_i^2=0$ and $\kappa(D_i)> 0$. If the characteristic of the field $k$ is zero, the condition $\kappa(D_i)> 0$ can be replaced by $\dim |D_i|>0$.

\end{cor}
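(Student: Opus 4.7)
The plan is to derive the corollary as a direct consequence of Theorem~\ref{gordo3} together with the Hu--Keel criterion~\cite{Hu}, which characterizes when a $\mathbb{Q}$-factorial normal projective variety with free and finitely generated Picard group has a finitely generated Cox ring: this happens exactly when the variety is a Mori dream space, i.e., when the effective cone is rational polyhedral and the nef cone is the convex hull of finitely many semiample classes. In the setting of surfaces, the latter condition amounts to the equality $\tilde{\mathcal{P}}(X)=\mathcal{P}(X)$, that is, to $\tilde{\mathcal{P}}(X)$ being closed, since, as recalled in Section~\ref{lados}, $\mathcal{P}(X)$ is the topological closure of $\tilde{\mathcal{P}}(X)$ (cf.~\cite{Kleiman}).

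First I would check that the Hu--Keel set-up is already in force under our hypotheses. Our $X$ is a smooth rational surface with Picard group freely generated by $\{[\tilde{L}], [E_1],\ldots,[E_m]\}$, and the standing assumption (the equivalent statements of Theorem~\ref{gordo1}) forces $NE(X)$ to be regular, hence rational polyhedral; via the intersection pairing this is exactly the statement that the effective cone of divisor classes is rational polyhedral, so Hu--Keel's first hypothesis is automatic. Therefore the only remaining Hu--Keel condition is the closedness of $\tilde{\mathcal{P}}(X)$, which is precisely what Theorem~\ref{gordo3} translates into the numerical condition on the divisors $D_i$ stated in the corollary. At this step I would invoke \cite[Corollary~3]{Galmon} to match the arithmetic data attached to $\nu$ with the algebraic input required by the Hu--Keel theorem, so that the application is uniform across all characteristics.

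For the last assertion, when the characteristic of $k$ is zero the equivalence between the conditions ``$D_i^2=0$ and $\kappa(D_i)>0$'' and ``$D_i^2=0$ and $\dim|D_i|>0$'' follows in the same way as in the concluding line of Theorem~\ref{gordo3}: the implication $\dim|D_i|>0 \Rightarrow \kappa(D_i)>0$ is trivial, and the reverse is provided by Lemma~\ref{lema3}, which, under the assumption $D_i^2=0$ and $\kappa(D_i)>0$, forces $|D_i|$ itself to be base-point-free and to define a morphism onto $\mathbb{P}^1$. The main obstacle is less a mathematical one than a matter of bookkeeping: one must verify that Hu--Keel's statement and the $\mathbb{Q}$-factoriality/Picard-freeness hypotheses apply verbatim to our rational surface in arbitrary characteristic, after which the argument reduces to combining Theorem~\ref{gordo3} with the two cited results.
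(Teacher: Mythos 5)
Your argument is correct and follows essentially the same route as the paper: combine Theorem~\ref{gordo3} with the Hu--Keel criterion \cite{Hu}, using that under the standing hypotheses $NE(X)$ is regular, so that closedness of $\tilde{\mathcal{P}}(X)$ is equivalent to the nef cone being spanned by finitely many semiample classes (the bridge the paper delegates to \cite{gm}), and handling the characteristic-zero refinement via Lemma~\ref{lema3} exactly as in Theorem~\ref{gordo3}. The only cosmetic point is your appeal to \cite[Corollary~3]{Galmon} ``to match arithmetic data,'' which plays no real role in the argument; the pertinent auxiliary reference is \cite{gm}, as indicated in the introduction.
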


The above result provides a wide range of rational surfaces with finitely generated Cox ring. Moreover their anticanonical Iitaka dimension can be $-\infty$, as the next example shows. It gives a new infinite family of surfaces with both conditions and with arbitrarily large Picard number.

\begin{exa}
\label{ejemplo}
{\rm
Assume that the characteristic of the field $k$ is zero. Fix two positive integers $a$ and $r$ such that $r\geq a\geq 4$ and $\gcd(a,r+1)=1$. Let $X$ be a surface obtained by a sequence of blow ups as in (\ref{blow}) coming from a divisorial valuation $\nu$ with 3 maximal contact values, $\betabarra_0:=a$, $\betabarra_1:=ar^2-r-1$ and $\betabarra_2:=\betabarra_0\betabarra_1+1$, and such that the strict transforms of the line at infinity $L$ pass exactly through the first $r$ blown-up points. Notice that $\gcd(\betabarra_0,\betabarra_1)=1$ and $r\leq \lfloor \betabarra_1/\betabarra_0\rfloor$; therefore such a valuation exists.

Let $p_m$ be the last blown-up point, that is, $\nu$ is the divisorial valuation defined by the exceptional divisor $E_m$. Then $d_m=ra$ and, therefore, $d_m^2>\betabarra_2$, that is, $D_m^2>0$. By Lemma \ref{lema2},  $D_i^2\geq 0$, $2 \leq i \leq m$, and, moreover, the equality $D_i^2=0$ only can occur when $i=i_1:=\lfloor \betabarra_1/\betabarra_0\rfloor +1$ (that is, the index $i_1$ is the only one where $p_{i_1}$ is a free point and $P_{i_1+1}$ a satellite one). Now,
$D_{i_1}^2=r^2-i_1>0$, hence $\nu$ satisfies the hypotheses of Corollary \ref{COX} and, as a consequence, the Cox ring, Cox$(X)$, is finitely generated.

If $K_X$ denotes a canonical divisor of $X$ we have that $D_{i_1}\cdot K_X=-3r+i_1>0$. Since $D_{i_1}$ is nef we deduce that $[-K_X]\not\in NE(X)$ and, as a consequence, the anticanonical Iitaka dimension of $X$ is $-\infty$.

Notice that $X$ cannot be obtained with the procedure described in \cite{cpr,cpr2} to get surfaces with finitely generated Cox ring. Indeed, if this happened, by the Abhyankar-Moh Theorem \cite{ja4, pinkam}, there would exist a curve $C$ on $\mathbb{P}^2$ with only one place at infinity such that the morphism $\pi$ given by $\nu$ is a resolution of its singularity at infinity and whose semigroup at infinity $- \nu (k[x,y])$
is spanned by $\delta_0:=-\nu(x)=r\betabarra_0$, $\delta_1:=-\nu(y)=r\betabarra_0-\betabarra_0=(r-1)\betabarra_0$ and $\delta_2:=-\nu(G(x,y,1))$, where $G(X,Y,Z)$ is a homogeneous polynomial of degree $r$ defining a curve whose local equation at $p$ defines an analytically irreducible germ whose strict transform on $X$ meets $E_{i_1}$ transversally at a non-singular point of the exceptional locus of $\pi$. Moreover, the  condition that $\gcd(\delta_0,\delta_1)\delta_2=\betabarra_0\delta_2$ belongs to the semigroup spanned by $\delta_0$ and $\delta_1$ must be satisfied. Using (\ref{www}) we get
$$\delta_2=r\nu(v)-\betabarra_1=r^2\betabarra_0-\betabarra_1=r+1.$$
As a consequence,  $r+1$ must
belong to the semigroup spanned by $r$ and $r-1$, which is a contradiction.


}
\end{exa}

\end{document}